\numberwithin{equation}{section}
\theoremstyle{plain}
\numberwithin{equation}{section}
\newtheorem{theorem}[equation]{Theorem}
\newtheorem{corollary}[equation]{Corollary}
\newtheorem{lemma}[equation]{Lemma}
\theoremstyle{definition}
\newtheorem{example}[equation]{Example}
\theoremstyle{remark}
\newtheorem{remark}[equation]{Remark}
\newcommand{\Z}{{\mathbb Z}}
\newcommand\Ext{\operatorname{Ext}}
\newcommand\Jac{\operatorname{Jac}}
\newcommand\Homol{\operatorname{H}}
\newcommand\coh{\Homol}
\newcommand\op{\operatorname{op}}
\newcommand\ot{\otimes}
\newcommand\Hom{\operatorname{Hom}}
\newcommand\Hoch{\operatorname{HH}}
\newcommand\HH{\Hoch}
\newcommand\V{\mathcal{V}}
\newcommand{\DOT}{\setlength{\unitlength}{1pt}\begin{picture}(2.5,2)(1,1)\put(2,3){\circle*{2}}\end{picture}}
\newcommand{\bu}{\DOT} 
\DeclareMathOperator{\Max}{Max}
\def\blx@maxline{77}
\begin{document}
\title[Tensor products and support varieties]
{Tensor products and support varieties for some noncocommutative Hopf algebras}
\author{Julia Yael Plavnik}
\email{julia@math.tamu.edu}
\author{Sarah Witherspoon}
\email{sjw@math.tamu.edu}
\address{Department of Mathematics, Texas A\&M University, College Station, Texas 77843, U.S.A}
\thanks{The first author was partially supported by NSF grant DMS-1410144.
The second author was partially supported by NSF grant DMS-1401016.
}
\subjclass{16T05, 18D10}
\keywords{projective modules; nonsemisimple Hopf algebra; support varieties; smash coproduct}
\date{May 28, 2017}

\begin{abstract}
We explore questions of projectivity and tensor products of modules for
finite dimensional Hopf algebras.
We construct many classes of examples
in which tensor powers of nonprojective modules are projective
and tensor products of modules in one order are projective but
in the other order are not.
Our examples are smash coproducts with duals of group algebras,
some having algebra and coalgebra structures twisted by cocycles.
We apply support variety theory for these Hopf algebras as a tool in our investigations.
\end{abstract}

\maketitle

\section{Introduction}

Tensor products of modules for finite dimensional cocommutative Hopf algebras
(equivalently finite group schemes) are well behaved:
The tensor product is  commutative up to natural isomorphism.
Tensor powers of nonprojective modules are nonprojective.
There is a well developed theory of support varieties---a very fruitful
tool originating from finite groups---and the  variety of a
tensor product of modules is the intersection of their
varieties,  as shown by Friedlander and Pevtsova~\cite{FP}.
All of these phenomena occur  in positive characteristic. 
In characteristic~0, 
some noncocommutative Hopf algebras share this good behavior,
such as the quantum elementary abelian groups in~\cite{PW2}
by Pevtsova and the second author.

By contrast, there are examples of finite dimensional noncocommutative
Hopf algebras for which the tensor product of modules is not so
well behaved:
Benson and the second author~\cite{BW} showed that  some
Hopf algebras constructed
from finite groups  in positive characteristic 
have nonprojective modules with projective tensor powers and
modules whose tensor products in one order are projective but
in the other order are not.
A support variety theory for finite dimensional
self-injective algebras~\cite{EHTSS,SS} applies
to these examples.
The varieties are reasonably well behaved,
yet the variety of a tensor product of modules is not the intersection
of their varieties.

These results lead us to ask: What hypotheses on a noncocommutative finite dimensional
Hopf algebra are necessary or sufficient to ensure that
(1) the tensor product of modules in
one order is projective if and only if it is projective in the other order,
(2) tensor powers of  nonprojective modules are nonprojective, or
(3) the support
variety of a tensor product of modules is the intersection of their  varieties?
For a given Hopf algebra, a positive answer to
(3) implies a positive answer to (1) and (2);
this follows, e.g., from
Theorem~\ref{properties}(i) below, a restatement of a theorem of 
Erdmann, Holloway, Snashall, Solberg, and Taillefer~\cite{EHTSS}. 

In this paper we answer some of these questions for some types of Hopf algebras,
using support variety theory as our main tool. 
We generalize the examples of \cite{BW} to present many more  Hopf algebras
with reasonable support variety theory and yet 
negative answers to all three questions.
In particular we give examples in characteristic~0.
These  are in Section~\ref{three}, and are 
smash coproducts of quantum elementary abelian groups 
with duals of group algebras.
We use the variety theory developed in~\cite{PW,PW2} for 
quantum elementary abelian groups 
(that is, tensor products of Taft algebras) 
to handle these examples.
Our general Theorem~\ref{Corollary tensor product prop not preserved} shows that
these types of examples are ubiquitous in any characteristic, and are 
not just isolated anomalies:
It implies that any finite dimensional nonsemisimple Hopf algebra having a positive answer
to question (3) above can be embedded in a Hopf algebra having a negative answer to (3).

In Section~\ref{four} we give many new examples in positive characteristic by generalizing
some of the results in~\cite{BW} to crossed coproducts of group algebras
and duals of group algebras---these are smash coproducts in which the algebra and 
coalgebra structures are twisted by  cocycles.
Again we find a good support variety theory and yet 
negative answers to all three questions  above.
Group cohomology features prominently in our methods, as it did in~\cite{BW},
exploiting our Theorem~\ref{thm:iso-ad} that Hochschild cohomology
of a twisted group algebra is isomorphic to group cohomology with
coefficients in the adjoint module given by the twisted group algebra. 
Even so, 
the behavior of the representations is far removed from the usual behavior of
those of finite groups or of finite group schemes.

In contrast to the results of Sections~\ref{three} and~\ref{four},
we show in Theorem~\ref{thm:rigid}
that for  quasitriangular Hopf algebras, the
answer to question~(2) above is yes. The answer to~(1) is automatically yes 
since the tensor product of modules is commutative up to natural isomorphism.
For (2), we give an elementary argument that applies more generally to any module 
of a Hopf algebra for which the tensor product 
of the module with its dual commutes up to isomorphism.
Question (3) is open for quasitriangular Hopf algebras.

The known support variety theories needed to make sense of question (3) in general
require some homological assumptions.
We recall these assumptions 
in Section~\ref{two}, and there we also define  smash coproducts
and crossed coproducts.
In Theorem~\ref{theor_modules_bicrossed char pos} we give a general
description of tensor products and duals of modules for a crossed
coproduct of a group algebra with the dual of a group algebra. 
In Section~\ref{three}, we give general results on support variety theory
for smash coproducts with duals of group algebras,
and present our examples involving quantum elementary abelian groups.
Support varieties for 
crossed coproducts of group algebras and duals of group algebras are 
in Section~\ref{four}. 
In Section~\ref{five}, we look at consequences of commutativity,
up to isomorphism, of the tensor product of some modules.

We  work over an algebraically closed field $k$ of arbitrary characteristic,
restricting either to characteristic~0 or to positive characteristic for some
classes of examples. All modules are finite dimensional
left modules unless otherwise indicated.

\section{Smash coproducts, crossed coproducts, and support varieties}\label{two}

In this section we recall definitions and results from the literature
that we will need, and develop basic representation theory of some crossed
coproduct Hopf algebras. 

\subsection*{Smash coproducts}\label{smash-coproduct}

Smash coproduct Hopf algebras were first defined by Molnar~\cite{M}
for any commutative Hopf algebra. Here we restrict
to the commutative Hopf algebras dual to group algebras. 

Let $A$ be a Hopf algebra over $k$ and 
$G$ a finite group acting on $A$ by  Hopf algebra automorphisms.
Let $k^G$ be the Hopf algebra dual to the group algebra $kG$,
i.e.\ $k^G = \Hom_k (kG, k)$, with vector space basis 
$\{p_x : x \in G \}$ dual to $G$. 
Multiplication is given by $p_x p_y = \delta_{x,y} p_x$, 
comultiplication  $\Delta(p_x) =
\sum_{y\in G} p_y\otimes p_{y^{-1}x},$ 
counit and antipode $\varepsilon(p_x) = \delta_{1,x}$ and $ S(p_x) = p_{x^{-1}}$, for all $x,y \in G$.

The corresponding {\em smash coproduct Hopf algebra}~\cite{M} is denoted 
 $K = A \natural k^G$
and given as follows. 
The algebra structure is the usual tensor product $A\otimes k^G$ of algebras. 
Let  $a\natural p_x$ denote the element
$a\otimes p_x$ in $K$, for each $a\in A$ and $x \in G$. Comultiplication is defined by
\begin{equation}\label{eqn:coprod}
  \Delta(a\natural p_x) = \sum_{y\in G} (a_1 \natural p_y) \otimes ((y^{-1}\cdot a_2) \natural p_{y^{-1}x}),
\end{equation}
for all $x\in G$, $a\in A$. The counit and antipode are defined by
\begin{equation*}
\varepsilon(a\natural p_x) = \delta_{1,x}\varepsilon(a) \ \ 
   \mbox{ and } \ \  S(a\natural p_x) = (x^{-1}\cdot S(a)) \natural p_{x^{-1}}.
\end{equation*}
Since $A$ and $k^G$ are subalgebras of the Hopf algebra $K$, for simplicity we will sometimes write $a$ and $p_x$
in place of the elements $a\natural 1$ and $1\natural p_x$ in $K$.


Note that the elements $p_x$ are orthogonal central idempotents of $K$.
Given a $K$-module $M$ and $x\in G$, we will denote by $M_x$ the $K$-submodule $p_x\cdot M$ of $M$, which we also view as an $A$-module by restriction of action to $A$. Then 
\begin{equation}\label{eqn:G-grading}
   M = \bigoplus_{x\in G} M_x . 
\end{equation}
In this way, $K$-modules are graded by the group $G$. We call $M_x$ 
the {\em $x$-component} of $M$.
For $y \in G$, let $^{y}M_x$  denote the {\em conjugate} $K$-module: This is
$M_x$ as a vector space, with action of $A$
given by $a\cdot_y m =
(y^{-1}\cdot a) \cdot m$ for all $a\in A$, $m \in M$.
Let $M^*=\Hom_k(M,k)$, the {\em dual} of the $K$-module $M$;
the module structure is given by $(b\cdot f)(m) = f( S(b)\cdot m)$ for all
$b\in K$, $f\in M^*$, $m\in M$.

We will use the following theorem that describes what happens to the $G$-grading
when taking  tensor products and duals of $K$-modules. 

\begin{theorem}\label{theor_modules_smash}\cite[Theorem 7.1]{MVW}
Let $M$, $N$ be $A\natural k^G$-modules. For each $x\in G$, there are isomorphisms of 
$A\natural k^G$-modules: 
\begin{itemize}
\item[(i)] $(M \otimes N)_x \simeq \bigoplus_{y,z\in G, yz=x}M_y\otimes  \, ^{y}\! N_z$, and
\item[(ii)] $(M^*)_x \simeq \, ^{x}\! (M_{x^{-1}})^*$.
\end{itemize}
\end{theorem}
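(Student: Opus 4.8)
The plan is to verify the two isomorphisms by tracking how the central idempotents $p_x$ act on the tensor product and dual modules, using the explicit comultiplication \eqref{eqn:coprod} and antipode formulas. For part (i), the $K$-module structure on $M\otimes N$ is given by $b\cdot(m\otimes n) = \sum (b_1\cdot m)\otimes(b_2\cdot n)$, so I would first compute $p_x\cdot(m\otimes n)$ for $m\in M_y$, $n\in N_z$. Applying \eqref{eqn:coprod} with $a=1$ gives $\Delta(1\natural p_x) = \sum_{w\in G}(1\natural p_w)\otimes(1\natural p_{w^{-1}x})$, so $p_x\cdot(m\otimes n) = \sum_w (p_w\cdot m)\otimes(p_{w^{-1}x}\cdot n)$, which is nonzero exactly when $w=y$ and $w^{-1}x=z$, i.e.\ $yz=x$. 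This already shows $(M\otimes N)_x = \bigoplus_{yz=x} M_y\otimes N_z$ as a vector space with the natural grading. The remaining work is to identify the $A$-action: for $a\in A$, one has $\Delta(a\natural 1) = \sum_y (a_1\natural p_y)\otimes((y^{-1}\cdot a_2)\natural 1)$ by \eqref{eqn:coprod}, so acting on $m\otimes n$ with $m\in M_y$ picks out the term with that specific $y$, giving $a\cdot(m\otimes n) = \sum (a_1\cdot m)\otimes((y^{-1}\cdot a_2)\cdot n)$. Comparing with the definition of the conjugate module $^{y}\!N_z$ (where $a\cdot_y n = (y^{-1}\cdot a)\cdot n$), this is precisely the $A$-module structure on $M_y\otimes {}^{y}\!N_z$ with its standard (untwisted, since $A$ acts diagonally through $\Delta_A$) Hopf action — here I would need to be slightly careful that the diagonal action of $A$ on $M_y\otimes {}^{y}\!N_z$ matches, using that $G$ acts on $A$ by Hopf algebra automorphisms so $y^{-1}$ commutes with $\Delta_A$. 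This gives the isomorphism in (i), and one checks it is also compatible with the full $K$-action (the $k^G$-part being just the grading we already matched).

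For part (ii), I would use the dual module structure $(b\cdot f)(m) = f(S(b)\cdot m)$. Since $S(p_x) = p_{x^{-1}}$, for $f\in M^*$ with $f$ supported on $M_{x^{-1}}$ (i.e.\ $f(M_w)=0$ for $w\neq x^{-1}$) one computes $(p_y\cdot f)(m) = f(p_{y^{-1}}\cdot m)$, which is $f(m)$ if $y^{-1}$ equals $x^{-1}$ restricted appropriately — so $(M^*)_x$ consists of those functionals supported on $M_{x^{-1}}$, which as a vector space is $(M_{x^{-1}})^*$. Then I would compute the $A$-action: using $S(a\natural p_x) = (x^{-1}\cdot S(a))\natural p_{x^{-1}}$, for $a\in A$ the action on $f\in (M^*)_x\cong (M_{x^{-1}})^*$ is $(a\cdot f)(m) = f((x\cdot S(a))\cdot m)$ — wait, I need the antipode of $a\natural 1$; from the formula $S(a\natural p_x)$ at $p_x = p_1$ this is not directly $S(a)\natural 1$, so I would instead expand $\Delta(a\natural 1)$ via \eqref{eqn:coprod} and apply the antipode componentwise, or more cleanly, compute directly that $(a\cdot f)(m)$ for $m\in M_{x^{-1}}$ equals $f$ of the appropriate twisted action, landing on $(x \cdot S(a))$ acting on $m$. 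Comparing with the definition of $^{x}\!(M_{x^{-1}})^*$ then gives the claimed isomorphism.

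The main obstacle I anticipate is bookkeeping the twists correctly: the conjugate-module construction, the antipode of $k^G$ (which inverts group elements), and the diagonal $A$-action all interact, and it is easy to get the direction of conjugation ($y$ versus $y^{-1}$) wrong. The cleanest approach is probably to fix explicit elements, apply \eqref{eqn:coprod} literally, and match both the $k^G$-grading and the $A$-action termwise, rather than arguing abstractly. Since this is precisely \cite[Theorem 7.1]{MVW}, I would cite that reference for the full details and only indicate the computation of $p_x\cdot(m\otimes n)$ and $p_x\cdot f$ to orient the reader, noting that the $A$-module identifications then follow by direct inspection of \eqref{eqn:coprod} and the antipode formula.
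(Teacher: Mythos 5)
Your approach is exactly the one the paper relies on: the paper itself simply cites \cite[Theorem 7.1]{MVW} for this statement, and its written-out proof of the twisted analogue (Theorem~\ref{theor_modules_bicrossed char pos}) proceeds by precisely the componentwise computation you describe --- reduce to $M=M_y$, $N=N_z$, match the $p_x$-grading, then match the $A$-action term by term against the conjugate-module definitions. Part (i) of your sketch is correct as written. In part (ii) the one place you hedged is the one place you landed on the wrong side: writing $a\natural 1=\sum_{w\in G}a\natural p_w$ and applying the antipode formula gives $S(a\natural 1)=\sum_{w\in G}(w^{-1}\cdot S(a))\natural p_{w^{-1}}$, and acting on $m\in M_{x^{-1}}$ kills every term except $w=x$, so
\[
(a\cdot f)(m)=f\bigl((x^{-1}\cdot S(a))\cdot m\bigr),
\]
i.e.\ the twist is by $x^{-1}$, not by $x$. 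This is what matches the target ${}^{x}(M_{x^{-1}})^*$, since with the paper's convention $a\cdot_x f=(x^{-1}\cdot a)\cdot f$ one gets $(a\cdot_x f)(m)=f\bigl(S(x^{-1}\cdot a)\cdot m\bigr)=f\bigl((x^{-1}\cdot S(a))\cdot m\bigr)$, using that $G$ acts by Hopf algebra automorphisms and hence commutes with $S$. With that correction the argument is complete and coincides with the reference's.
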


\subsection*{Crossed coproduct Hopf Algebras}\label{bicrossproduct}
We focus on the 
case $A=kL$, the group algebra of a finite group $L$.

Take $G$ to be a finite group acting on $L$ by automorphisms, 
and let $L$ act on $G$ trivially.
(There is a version of the following construction for nontrivial actions of $L$ on $G$;
see, e.g.,~\cite{A}.
The representation theory of the resulting crossed coproduct is more complicated,
and we do not consider it here.)

Let $(k^G)^{\times}$ denote the group of units of $k^G$.
Let $\sigma: L\times L\rightarrow (k^G)^{\times}$ be a {\em normalized 2-cocycle}, that is, a
function for which
\begin{equation}\label{cocycle}
   \sigma(l,m)\sigma(lm,n) = \sigma(m,n) \sigma(l, mn)
\end{equation}
and $\sigma(1,l)=\sigma(l,1) =1$ for all $l,m,n\in L$. We  write
$$
  \sigma(l,m) = \sum_{x\in G} \sigma_x(l,m) p_x
$$
for functions $\sigma_x : L\times L\rightarrow k^{\times}$. 
As the elements $p_x$ are orthogonal idempotents in $k^G$, the functions
$\sigma_x$ are normalized 2-cocycles with values in $k^{\times}$.
For all $x\in G$ and $l\in L$, by  
Equation~\eqref{cocycle} with $m=l^{-1}$ and $n=l$, 
$
   \sigma_x(l, l^{-1}) = \sigma_x(l^{-1},l) . $

Let $\tau: L\to k^G\otimes k^G$ be a {\em normalized 2-cocycle}, that is, 
$\tau$ is defined by 
$$
  \tau(l) = \sum_{x,y\in G} \tau_{x,y}(l) p_x\otimes p_y
$$
for functions $\tau_{x,y} : L\rightarrow k^{\times}$ satisfying 
\begin{equation}\label{eqn:tau-mult}
   \tau_{xy,z}(l)\tau_{x,y}(l) = \tau_{x,yz}(l) \tau_{y,z}(x^{-1}\cdot l)
\end{equation}
and
$
\tau_{1, x}(l)=\tau_{x,1}(l) =1
$
for all $x, y, z\in G$, $l\in L$.

Assume additionally that
\begin{equation}\label{eqn:sigma-tau-mult}
   \sigma_{xy} (l,m)\tau_{x,y}(lm) =\sigma_x(l,m)\sigma_y(x^{-1}\cdot l, x^{-1}\cdot m)\tau_{x,y}(l)\tau_{x,y}(m)
\end{equation}
for all $l,m\in L$ and $x,y\in G$.
In particular, setting $x=1$,  this implies that
   $\sigma_1(l,m)=1$, and  setting $l=1$, this implies that $\tau_{x,y}(1)=1$
for all $l,m\in L$ and $x,y\in G$.
It also follows from \eqref{eqn:sigma-tau-mult}, setting $y=x^{-1}$, that 
\begin{equation}\label{eqn:id1}
\sigma_x^{-1}(l,m) = \sigma_{x^{-1}}(x^{-1}\cdot l, x^{-1}\cdot m)\tau_{x,x^{-1}}(l)\tau_{x,x^{-1}}(m)\tau^{-1}_{x,x^{-1}}(lm) ,
\end{equation}
and setting $m=l^{-1}$, that 
\begin{equation}\label{eqn:id2}
 \tau^{-1}_{x,y}(l) = \sigma^{-1}_{xy} (l,l^{-1}) \sigma_x(l,l^{-1})\sigma_y(x^{-1}\cdot l, x^{-1}\cdot l^{-1})\tau_{x,y}(l^{-1}).
\end{equation}

We define the {\em crossed coproduct Hopf algebra} $K = kL \natural_{\sigma}^\tau k^G$
 as follows. 
As a vector space, it is
 $kL\otimes k^G$.
The product is twisted by~$\sigma$:
\begin{equation}\label{formula twisted product} (l\natural p_x) (m\natural p_y) = lm \natural \sigma(l,m) p_xp_y =
\delta_{x,y} \sigma_x(l, m) lm\natural p_x,
\end{equation}
for $l,m\in L$ and $x,y\in G$.
The coproduct is twisted by~$\tau$:
$$
   \Delta(l\natural p_x) = \sum_{y\in G}\tau_{y,y^{-1}x}(l) (l\natural p_y) \otimes ((y^{-1}\cdot l) \natural p_{y^{-1}x})
$$
for $l\in L$, $x\in G$.
The counit and antipode are given by
$$
\varepsilon(l\natural p_x) = \delta_{1,x} \ \ \ \mbox{ and } \ \ \ 
   S(l\natural p_x) = \tau_{x, x^{-1}}(l^{-1}) \sigma_x(l^{-1},l) (x^{-1}\cdot l^{-1}) \natural p_{x^{-1}}
$$
for all $l\in L$, $x\in G$.

It may be checked directly that $K$ is a Hopf algebra. 
Verification calls on assumption~(\ref{eqn:sigma-tau-mult})
as well as the cocycle defining properties~(\ref{cocycle})
and~(\ref{eqn:tau-mult}) of $\sigma$ and $\tau$.  
Our Hopf algebra $K=kL\natural_{\sigma}^{\tau}k^G$ 
is related to that given by Andruskiewitsch in~\cite[Proposition~3.1.12]{A},
but the coalgebra structure is slightly different.
We choose this version since it generalizes Molnar's smash coproduct as used in~\cite{BW}. 
See also~\cite{AD} and~\cite[Section 4]{Majid} for earlier versions
of \cite[Proposition~3.1.12]{A}.
(In comparison to~\cite{A}, we take $A=k^G$, $B=kL$, 
the weak coaction dual to our action by automorphisms of $G$ on $L$ is given by $\rho(l) = \sum_{x\in G} x^{-1}\cdot l \otimes p_x$, and the weak action $\rightharpoonup$ of $L$ on $G$ is the trivial action. Condition (3.1.9) in \cite{A} holds because the action of $G$ on $L$ is by automorphisms, and condition (3.1.11) is satisfied due to condition \eqref{eqn:sigma-tau-mult}.)

Twisted group algebras arise as subalgebras of $K$.
In general, for $\alpha: L\times L\rightarrow k^{\times}$ a 2-cocycle,
the {\em twisted group algebra} $k^{\alpha}L$ is the vector space $kL$ 
with multiplication determined by $\overline{l}\cdot \overline{m} 
=\alpha(l,m) \overline{lm}$ for all $l,m\in L$, where $\overline{l}$ is the basis
element of $k^{\alpha}L$ corresponding to $l$.
Since $\alpha$ is a 2-cocycle, we find that for all $l\in L$, 
\[
    \overline{l^{-1}} = \alpha (l^{-1},l) (\overline{l})^{-1} = \alpha (l,l^{-1})
   (\overline{l})^{-1} . 
\]

The elements $p_x$ (that is, $1\natural p_x$)
 are orthogonal central idempotents in $K$, and  
\[ K = kL\natural^{\tau}_{\sigma}k^G \simeq \bigoplus_{x\in G} kL \cdot p_x\simeq \bigoplus_{x\in G} k^{\sigma_x}L,
\]
a direct sum of ideals, where 
$k^{\sigma_x}L$ is the twisted group algebra defined above. 
The subalgebra $kL\cdot p_x$ or  $kL\ot kp_x$ of $K$ is 
indeed isomorphic to $k^{\sigma_x}L$ since 
$(l\natural p_x)(m\natural p_x) = \sigma_x(l,m) lm \natural p_x$ for all $l,m\in L$. 
Thus for a $K$-module $M$:
\[ M \simeq  \bigoplus_{x\in G} M_x,
\]
where each $M_x = p_x\cdot M$ is naturally a $k^{\sigma_x}L$-module.

In general, given a 2-cocycle $\alpha$ for $L$ with coefficients in $k^{\times}$, and an element $y$ of $G$, we can define
a new 2-cocycle $\, ^{y}\! \alpha$ for $L$ by 
\[ 
  ^{y}\! \alpha (l, m) = \alpha ( y^{-1}\cdot l, y^{-1}\cdot m) 
\] 
for all $l,m\in L$. 
We say that $\alpha$ is {\em $G$-invariant} if $ {}^y\alpha = \alpha$ for all $y\in G$. 
For each $x\in G$, the group $G$ acts on the vector space $k^{\sigma_x}L$: 
Define
$y\cdot \overline{l} = \overline{y\cdot l}$ for all $y\in G$ and $l\in L$.
A calculation shows 
that when $\sigma_x$ is $G$-invariant, this action of $G$ on $k^{\sigma_x}L$ 
is by algebra automorphisms.

We will need to use the following operations on cocycles. 
Two 2-cocycles $\alpha,\beta: L\times L\rightarrow k^{\times}$ may be multiplied:
For all $l,m\in L$, 
\[
   (\alpha\beta) (l,m) = \alpha(l,m)\beta(l,m) . 
\]
The product $\alpha\beta$ is again a 2-cocycle.
The 2-cocycle $\alpha$ has a multiplicative inverse: For all $l,m\in L$, 
\[
  \alpha^{-1}(l,m) = (\alpha(l,m))^{-1} . 
\]

Given a $K$-module $M$ and $y \in G$,  the {\em conjugate} module 
$^{y}M_x$ is $M_x$ as a vector space, with $k^{\, ^{y}\! \sigma_x}L$-module structure given by $\overline{l}\cdot_y v =
(y^{-1}\cdot \overline{l}) \cdot v$, for all $l\in L$, $v \in M$.
If $ {}^y \sigma_x\neq \sigma_x$, then $ {}^y M_x$ is not a 
$k^{\sigma_x}L$-module.

Notice that \eqref{eqn:sigma-tau-mult} may be rewritten as
\begin{equation}\label{eqn:rewrite}
   \sigma_{xy} (l,m) =\sigma_x(l,m)\sigma_y(x^{-1}\cdot l, x^{-1}\cdot m)\tau_{x,y}(l)\tau_{x,y}(m) \tau^{-1}_{x,y}(lm),
\end{equation}
or as $\sigma_{xy} = \sigma_x ({}^x\sigma_y)\beta_{xy}$ where 
$   \beta_{x,y}(l,m) = \tau_{x,y}(l)\tau_{x,y}(m) \tau^{-1}_{x,y}(lm)
$ 
for all $x,y\in G$, $l,m\in L$. 
Then $\beta_{x,y}$ is a coboundary and 
$\sigma_{xy}$ and $\sigma_x (^{x}\! \sigma_y)$ are cohomologous. 
By \cite[Lemma 1.1]{K}, there is an isomorphism of 
twisted group algebras $k^{\sigma_{xy}}L\stackrel{\sim}{\longrightarrow}k^{\sigma_x \cdot ( ^{x}\! \sigma_y)}L$ given by  $\overline{l}\rightarrow \tau_{x, y}(l) \overline{l}$.

We next give a version of Theorem~\ref{theor_modules_smash} for $K=kL\natural^{\tau}_{\sigma}k^G$.
It is stated slightly differently since the subalgebras
$k^{\sigma_x}L$ of $K$ may not be Hopf subalgebras, and $G$
may not act on $k^{\sigma_x}L$ by algebra automorphisms. 
In general, for 2-cocycles $\alpha,\beta$ on $L$ with values in $k^{\times}$,
the tensor product of a $k^{\alpha}L$-module with a $k^{\beta}L$-module has the
structure of a $k^{\alpha\beta}L$-module where $\overline{l}$ in $k^{\alpha\beta}L$ acts
as $\overline{l}\ot \overline{l}$ (the first tensor factor in 
$k^{\alpha}L$, the second in $k^{\beta}L$) by~\cite[Lemma~5.1]{K}.
For a 2-cocycle $\alpha$ on $L$ with values in $k^{\times}$,
the dual of a $k^{\alpha}L$-module $M$
has the structure of a $k^{\alpha^{-1}}L$-module 
where $\overline{l}$ in $k^{\alpha^{-1}}L$ acts as 
$(\overline{l}\cdot f)(m) = f( (\overline{l})^{-1}\cdot m)$ 
for all $m\in M$, $f\in M^*$ by~\cite[Theorem~4.6]{K}.

\begin{theorem}\label{theor_modules_bicrossed char pos}
Let $M$, $N$ be $kL\natural^{\tau}_{\sigma} k^G$-modules. 
For each $x\in G$, there are isomorphisms of $k^{\sigma_x}L$-modules: 
\begin{itemize}
\item[(i)] $(M \otimes N)_x \simeq \bigoplus_{y,z\in G, yz=x}M_y\otimes  \, ^{y}\! N_z$, and
\item[(ii)] $(M^*)_x \simeq \, ^{x}\! (M_{x^{-1}})^*$.
\end{itemize}
\end{theorem}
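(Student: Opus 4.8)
The plan is to mimic the proof of Theorem~\ref{theor_modules_smash} (the untwisted case from~\cite{MVW}), keeping careful track of the cocycle factors $\sigma$ and $\tau$ at each step. The key point is that the underlying vector-space decompositions are exactly the same as in the smash coproduct case; what changes is the module structure, and the assertion is that the twisting cocycles conspire—precisely via the compatibility condition~\eqref{eqn:sigma-tau-mult} and its consequences~\eqref{eqn:rewrite}, \eqref{eqn:id1}, \eqref{eqn:id2}—to produce the claimed isomorphisms of twisted group algebra modules. First I would fix the conventions: for a $K$-module $M$ with decomposition $M=\bigoplus_{x\in G}M_x$ from the remarks preceding the theorem, each $M_x$ is a $k^{\sigma_x}L$-module via $\overline{l}\cdot v = (l\natural p_x)\cdot v$, and the tensor product $M\otimes N$ and dual $M^*$ carry their $K$-module structures coming from the coproduct twisted by $\tau$ and the antipode of $K$.

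For part~(i): compute $p_x\cdot(m\otimes n)$ for $m\in M_y$, $n\in N_z$ using the formula $\Delta(l\natural p_x)=\sum_{y\in G}\tau_{y,y^{-1}x}(l)\,(l\natural p_y)\otimes((y^{-1}\cdot l)\natural p_{y^{-1}x})$. Since $p_x = 1\natural p_x$ and $\tau$ is normalized, one gets that $(M\otimes N)_x$ is spanned by $m\otimes n$ with $m\in M_y$, $n\in N_z$, $yz=x$, so the vector-space decomposition $(M\otimes N)_x \simeq \bigoplus_{yz=x} M_y\otimes N_z$ is immediate. Then I would compute the action of $\overline{l}=(l\natural p_x)$ on such a summand: applying $\Delta$, the element $l\natural p_x$ acts on $m\otimes n$ as $\tau_{y,z}(l)\,\big((l\natural p_y)\cdot m\big)\otimes\big(((y^{-1}\cdot l)\natural p_z)\cdot n\big)$. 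The first factor is $\overline{l}$ acting in $k^{\sigma_y}L$ on $M_y$; the second is $\overline{y^{-1}\cdot l}$ acting in $k^{\sigma_z}L$ on $N_z$, which is exactly the action of $\overline{l}$ on the conjugate module ${}^{y}\!N_z$ in $k^{\,{}^y\!\sigma_z}L$. So modulo the scalar $\tau_{y,z}(l)$, this matches the action of $\overline{l}$ on $M_y\otimes{}^y\!N_z$ as a module over $k^{\sigma_y\cdot({}^y\sigma_z)}L$ (via~\cite[Lemma~5.1]{K}). Finally I invoke~\eqref{eqn:rewrite}, which says $\sigma_x = \sigma_{yz} = \sigma_y({}^y\sigma_z)\beta_{y,z}$ with $\beta_{y,z}$ the coboundary $\overline{l}\mapsto\tau_{y,z}(l)\overline{l}$, together with the isomorphism $k^{\sigma_{yz}}L\xrightarrow{\sim}k^{\sigma_y\cdot({}^y\sigma_z)}L$, $\overline{l}\mapsto\tau_{y,z}(l)\overline{l}$ from~\cite[Lemma~1.1]{K}; under this isomorphism the scalar $\tau_{y,z}(l)$ is absorbed, yielding the asserted isomorphism of $k^{\sigma_x}L$-modules.

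For part~(ii): the vector-space identification $(M^*)_x = p_x\cdot M^* \simeq (M_{x^{-1}})^*$ follows since $(p_x\cdot f)(m) = f(S(p_x)\cdot m) = f(p_{x^{-1}}\cdot m)$, so $p_x\cdot f$ is supported on $M_{x^{-1}}$. For the module structure, compute $(\overline{l}\cdot f)(m)$ for $f\in(M^*)_x$, $m\in M_{x^{-1}}$, using $(b\cdot f)(m)=f(S(b)\cdot m)$ with $b = l\natural p_x$ and the antipode formula $S(l\natural p_x)=\tau_{x,x^{-1}}(l^{-1})\sigma_x(l^{-1},l)\,(x^{-1}\cdot l^{-1})\natural p_{x^{-1}}$. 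This exhibits the action as a scalar multiple of $f\big((x^{-1}\cdot\overline{l})^{-1}\cdot m\big)$-type expression, which is precisely the action defining the dual $k^{\,{}^x\!\sigma_{x^{-1}}^{-1}}L$-module structure combined with conjugation by $x$. Matching the scalar $\tau_{x,x^{-1}}(l^{-1})\sigma_x(l^{-1},l)$ against the relevant cocycle identities is where identities~\eqref{eqn:id1} and~\eqref{eqn:id2} come in, together with the identification of $(k^{\sigma_{x^{-1}}}L)^*$ as a $k^{\sigma_{x^{-1}}^{-1}}L$-module from~\cite[Theorem~4.6]{K} and the relation between $\sigma_{x^{-1}}^{-1}$ and ${}^x\!\sigma_x$ that~\eqref{eqn:id1} provides; applying ${}^x(-)$ and absorbing the resulting coboundary scalar via~\cite[Lemma~1.1]{K} gives ${}^x\!(M_{x^{-1}})^*$ as a $k^{\sigma_x}L$-module.

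The main obstacle I anticipate is purely bookkeeping: making sure every cocycle scalar that appears—$\tau_{y,z}(l)$ in part~(i), and $\tau_{x,x^{-1}}(l^{-1})\sigma_x(l^{-1},l)$ together with the inversion $\overline{l^{-1}}=\alpha(l,l^{-1})(\overline{l})^{-1}$ in part~(ii)—is accounted for by exactly one application of a coboundary-change-of-cocycle isomorphism (\cite[Lemma~1.1]{K}), with no leftover discrepancy. The identities~\eqref{eqn:id1} and~\eqref{eqn:id2} are stated in the excerpt precisely so that the antipode-induced scalars in part~(ii) can be rewritten into the form needed to recognize the conjugated dual; the delicate step is choosing the right substitutions ($m=l^{-1}$, or $y=x^{-1}$) to bring~\eqref{eqn:sigma-tau-mult} into usable shape at each point. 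Once the conventions are pinned down, each verification is a finite computation, and no genuinely new idea beyond the smash-coproduct proof is required.
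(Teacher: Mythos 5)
Your proposal is correct and follows essentially the same route as the paper's proof: reduce to homogeneous components $M_y$, $N_z$, compute the twisted coproduct and antipode actions directly, and absorb the resulting scalars $\tau_{y,z}(l)$ and $\tau_{x,x^{-1}}(l^{-1})\sigma_x(l^{-1},l)$ via the change-of-cocycle isomorphisms of \cite[Lemma~1.1]{K} using identities~\eqref{eqn:rewrite}, \eqref{eqn:id1}, and~\eqref{eqn:id2}. The only blemish is a harmless index slip in part~(ii) (the relevant comparison from~\eqref{eqn:id1} is between ${}^{x}\sigma_{x^{-1}}^{-1}$ and $\sigma_x$, not ``$\sigma_{x^{-1}}^{-1}$ and ${}^x\sigma_x$''), which does not affect the argument.
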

\begin{proof}
The proof is very similar to that of~\cite[Theorem 7.1]{MVW}.
We include details  to show the effects of $\sigma$ and $\tau$.

We will prove  statement (i) for modules of the form
$M = M_y$, $N = N_z$ for $y,z\in G$.
In principle, $M_y \otimes {}^yN_z$ is  a 
$k^{\sigma_y ( ^{y}\! \sigma_z)}L$-module. 
By equation~(\ref{eqn:rewrite}), 
$\sigma_y ({}^{y}\! \sigma_z)$ and $\sigma_{yz}$ are cohomologous, and so 
$k^{\sigma_y (^{y}\! \sigma_z)}L$ and $k^{\sigma_{yz}}L$ are isomorphic. 
This means that 
$M_y \otimes {}^y\! N_z$ can also be regarded as a $k^{\sigma_x}L$-module for $x=yz$. 
We will prove that the $k^{\sigma_x}L$-module $(M\ot N)_x$ is isomorphic to the tensor product
of the $k^{\sigma_y}L$-module $M_y$ and the $k^{\, ^{y}\! \sigma_z}L$-module $ {}^yN_z$ under this isomorphism of twisted group algebras.
The target module $M_y\ot  {}^yN_z$ is a $K$-module on which $p_{yz}$ acts as the identity and $p_w$ acts as $0$ for $w \neq yz$.
Consider the action of $l\natural p_x$, 
identified with $\overline{l}$ in $k^{\sigma_x}L$, for $l$ in $L$.
Applying $\Delta$ to $l \natural p_x$, we obtain
\[ (l\natural p_x)\cdot (m\otimes n) = \delta_{x, yz} \tau_{y, z}(l) 
  (\overline{l}\cdot m)\otimes ((y^{-1}\cdot \overline{l})\cdot n)
\]
for all $m\in M_y$ and $n\in N_z$.
On the other hand, 
the action of $\overline{l}$ in $k^{\sigma_y ({}^y\sigma_z)}L$ on the tensor product of the $k^{\sigma_y}L$-module $M_y$
and $k^{\, ^{y}\! \sigma_z}L$-module ${}^yN_z$ is given by 
\[ \overline{l}\cdot (m\otimes n) = (\overline{l}\cdot m)\otimes (\overline{ l}\cdot_{y} n)
  = (\overline{l}\cdot m)\otimes ((y^{-1}\cdot \overline{l})\cdot n).
\]
Considering the isomorphism $k^{\sigma_x}L\stackrel{\sim}{\longrightarrow}k^{\sigma_y (^{y}\! \sigma_z)}L$ given by $\overline{l}\rightarrow \tau_{y, z}(l) \overline{l}$, the $k^{\sigma_x}L$-module structure on the vector space $M_y \otimes {}^y\! N_z$ is 
precisely that arising from this isomorphism and the action of 
$k^{\sigma_y ({}^y\sigma_z)}L$ on this space. 

To prove (ii), we assume $M=M_y$ for some $y\in G$.
We first show that the dual $K$-module $M^*$ satisfies $M^* = (M^*)_{y^{-1}}$.
Let $x\in G$, $f\in M^*$, and $m\in M$.
Then
$$
  ((1\natural p_x)\cdot (f))(m) = f(S(1\natural p_x)m) =
   f((1\natural p_{x^{-1}})m) = \delta_{x^{-1},y} f(m) .
$$
So $M^*=(M_y)^*=(M^*)_{y^{-1}}$.

Setting $x=y^{-1}$, we now 
want to determine the $k^{ \sigma_{x}}L$-module structure on
$(M^*)_{x}$. 
There is an isomorphism $k^{\sigma_{x}}L \stackrel{\sim}{\longrightarrow} 
k^{{}^{x}\sigma_y^{-1}}L$, given by $\overline{l}\to \tau^{-1}_{x,y}(l)\overline{l}$,
by~(\ref{eqn:id1}).
Now $M_y$ is a $k^{\sigma_y}L$-module, so $(M_y)^*$ has the structure of a
$k^{\sigma_y^{-1}}L$-module and ${}^{x} (M_y)^*$ has the structure of
a $k^{ {}^{x}\sigma_y^{-1}}L$-module. 
Recall that if $\overline{l}\in k^{\sigma_z}L$, then $\overline{l^{-1}} = \sigma_z(l^{-1}, l)\overline{l}^{-1}$.
Then, for $m\in M_y$ and $f\in M^*_y$, by~(\ref{eqn:id2}) with $x=y^{-1}$,
\begin{eqnarray*}
   ((l\natural p_{y^{-1}})\cdot f)(m) & 
  =& f(\tau_{y^{-1},y}(l^{-1})\sigma_{y^{-1}}(l^{-1},l)((y\cdot l^{-1}) \natural p_y) m)\\
  &=& \tau_{y^{-1},y}(l^{-1})\sigma_{y^{-1}}(l^{-1},l)f\left( \overline{(y\cdot l^{-1})} m \right)\\
  &=& \tau^{-1}_{y^{-1},y} (l) f \left( \overline{(y\cdot l)}^{-1} m\right)  \\
  & = & \tau^{-1}_{y^{-1},y}(l) ((y\cdot \overline{l})\cdot f) (m), 
\end{eqnarray*}
where the overline notation is used first for elements in $k^{\sigma_y}L$,
and in the last line for elements in $k^{\sigma_y^{-1}}L$. 
The action of $y$ on $\overline{l}$ in the last line indicates that
$M_y^*$ is a $k^{ {}^x\sigma_y^{-1}}L$-module. 
Under the isomorphism $k^{\sigma_x}L\stackrel{\sim}{\longrightarrow} 
k^{ {}^x\sigma_y^{-1}}L$ given by $\overline{l}\mapsto \tau_{x,y}^{-1}(l) \overline{l}$, with $x=y^{-1}$, the $k^{ \sigma_{x}} L$-module structure on $(M^*)_{x}$ is
as claimed. 
\end{proof}

\subsection*{Support varieties}\label{supp-var}

We recall the needed definitions and results from \cite{EHTSS,SS}
on support varieties  based on Hochschild cohomology. 
There is also a parallel support variety theory using Hopf algebra cohomology,
a direct generalization of the theory for finite groups based on
group cohomology. Consequences for representation theory are the same in
either theory, although the tensor product property~(\ref{eqn:tpp}) below may not be.
We choose the Hochschild cohomology approach, as these support varieties
contain some useful extra information for our examples, namely  the $G$-components
for modules graded by the group $G$ as in~(\ref{eqn:G-grading}).

Let $A$ be a finite dimensional self-injective algebra over the field $k$, and 
let $A^e = A\otimes A^{\op}$ (the enveloping algebra of $A$).
For an $A$-bimodule $M$, 
Hochschild cohomology $\Hoch^{\bu}(A, M)$
is isomorphic to $\Ext^{\bu}_{A^e}(A, M)$;
we abbreviate $\Hoch^{\bu}(A,A)$ by $\Hoch^{\bu}(A)$.

For any left $A$-module $M$, there is a graded ring homomorphism from  
$\HH^{\bu}(A)$ to  $\Ext^{\bu}_A(M,M)$ given by $ - \ot_A M$.
Followed by Yoneda composition with generalized
extensions of $N$ by $M$, this homomorphism induces an action
of $\HH^{\bu}(A)$ on $\Ext^{\bu}(M,N)$ for any left $A$-module $N$.

In order to define support varieties, we make assumptions (fg1) and (fg2) below.
Let $\Jac(A)$ denote the Jacobson radical of $A$.

\quad

\noindent Assumption (fg1)\label{fg1}:\\
There is a graded subalgebra $H_A^{\bu}$ of $\Hoch^{\bu}(A)$ such that $H_A^{\bu}$ is a finitely
generated commutative algebra and $H_A^0 = \Hoch^{0}(A)$.

\medskip


\noindent
Assumption (fg2)\label{fg2}:\\
$\Ext^{\bu}_A(A/ \Jac(A), A/ \Jac(A))$ is a finitely generated $H_A^{\bu}$-module.

\quad

\noindent
By \cite[Proposition 2.4]{EHTSS}, 
the assumption (fg2) is equivalent to
either of: 
\begin{itemize}
\item[(i)] For all finite dimensional $A$-modules $M$ and $N$, $\Ext^{\bu}_A(M, N)$ is finitely
generated over $H_A^{\bu}$.
\item[(ii)] For all finite dimensional $A$-bimodules $M$, $\Hoch^{\bu}(A, M)$ is finitely generated
over $H_A^{\bu}$. 
\end{itemize}
Further, the above statement (i) is equivalent to the corresponding statement in which
$M = N$.


Assume (fg1) and (fg2) hold.
Let  
\[
   \mathcal V_{H^{\bu}_A} = \Max(H^{\bu}_A) , 
\] 
  the maximal ideal
spectrum of $H_A^{\bu}$.
We will also use the notation $\mathcal V _A$ when it is clear
from context which algebra $H_A^{\bu}$ is involved.

Let $M$  be a finite dimensional $A$-module. Let $I_{H^{\bu}_A}(M)$ be
the annihilator of the action of $H_A^{\bu}$
on $\Ext^{\bu}_A(M, M)$. The action may be taken to be either the left
or the right action; their annihilators coincide according to \cite[Lemma 2.1]{SS}. Then
$I_{H^{\bu}_A}(M)$ is a homogeneous ideal of $H_A^{\bu}$.
The {\em support variety} of $M$ is  
\[
   \mathcal V_{A}(M) = \Max (H^{\bu}_A / I_{H^{\bu}_A}(M)) ,
\]
the maximal ideal spectrum of  $H_A^{\bu}/I_{H^{\bu}_A}(M)$,
as in~\cite[Definition 2.3]{SS}.
We identify $\V_A(M)$ with the subset of $\V_A$ consisting of ideals 
containing $I_{H^{\bu}_A}(M)$. 

An action of a group $G$  on $A$ by algebra automorphisms induces an action
on Hochschild cohomology $\Hoch^{\bu}(A)$.
If the action of $G$ on $\HH^{\bu}(A)$ 
preserves $H_A^{\bu}$, then $G$ also acts on subvarieties of $\mathcal V_A$
in such a way that $\mathcal V _{A} ( \, {}^g \! M) = \, {}^g \mathcal V _{A}(M)$
for all finite dimensional $A$-modules $M$ and group elements $g\in G$.

We will use  the following result from \cite{EHTSS,SS}.
Part~(i) follows from~\cite[Theorem~2.5(c)]{EHTSS}, 
which states more generally that the dimension of a support variety
is the complexity of the module (that is, the rate of growth
of a minimal projective resolution). 
Part~(ii) is~\cite[Proposition~3.4(f)]{SS}.

\begin{theorem}\cite{EHTSS,SS}\label{properties}
Let $A$ be a finite dimensional self-injective algebra  satisfying (fg1) and (fg2),
and let $M$ and $N$ be finite dimensional $A$-modules. Then
\begin{itemize}
\item[(i)] $\dim(\V_A(M))= 0$ if and only if $M$ is projective.
\item[(ii)] $\V_A(M\oplus N)=\V_A(M)\cup\V_A(N)$.
\end{itemize}
\end{theorem}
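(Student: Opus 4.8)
The plan is to deduce both statements from the general support variety machinery of \cite{EHTSS,SS}, together with standard facts about finitely generated graded modules over Noetherian graded rings; the hypotheses (fg1) and (fg2) are exactly what make this machinery applicable, via the two statements (i) and (ii) recalled just after (fg2).

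For part (i), the key input is \cite[Theorem~2.5(c)]{EHTSS}: under (fg1)--(fg2) the Krull dimension $\dim\V_A(M)$ equals the complexity $\operatorname{cx}_A(M)$, i.e.\ the polynomial rate of growth of the terms of a minimal projective resolution of $M$. Granting this, I would argue that $\dim\V_A(M)=0$ iff $\operatorname{cx}_A(M)=0$ iff the minimal projective resolution of $M$ is finite, i.e.\ $M$ has finite projective dimension; since $A$ is self-injective this forces $M$ to be projective (over a self-injective algebra the syzygy of a nonprojective module is again nonprojective, so a finite minimal resolution already stops in degree~$0$). The converse is immediate. Alternatively, without quoting \cite[Theorem~2.5(c)]{EHTSS}, one could use that by statement (i) after (fg2) the graded $H_A^{\bu}$-module $\Ext_A^{\bu}(M,M)$ is finitely generated, so $\dim(H_A^{\bu}/I_{H_A^{\bu}}(M))$ records the rate of growth of $\dim_k\Ext_A^n(M,M)$; this vanishes iff $\Ext_A^n(M,M)=0$ for $n\gg 0$, which over a self-injective algebra is equivalent to projectivity of $M$.

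For part (ii), I would reduce to the ideal-theoretic identity $I_{H_A^{\bu}}(M\oplus N)=I_{H_A^{\bu}}(M)\cap I_{H_A^{\bu}}(N)$, which is \cite[Proposition~3.4(f)]{SS}. It follows from the $H_A^{\bu}$-module decomposition
\[
 \Ext_A^{\bu}(M\oplus N,M\oplus N)\;\cong\;\Ext_A^{\bu}(M,M)\oplus\Ext_A^{\bu}(M,N)\oplus\Ext_A^{\bu}(N,M)\oplus\Ext_A^{\bu}(N,N):
\]
an element $\zeta\in H_A^{\bu}$ annihilates the left side iff it annihilates all four summands; if $\zeta\in I_{H_A^{\bu}}(M)$ then $\zeta\ot_A M=0$, which kills $\Ext_A^{\bu}(M,M)$ and, via Yoneda composition, the two cross terms as well, and symmetrically for $I_{H_A^{\bu}}(N)$, while conversely each of $\Ext_A^{\bu}(M,M)$ and $\Ext_A^{\bu}(N,N)$ is an $H_A^{\bu}$-summand. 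Once $I_{H_A^{\bu}}(M\oplus N)=I_{H_A^{\bu}}(M)\cap I_{H_A^{\bu}}(N)$ is established, I would finish using that maximal ideals are prime: $\mathfrak m\supseteq I_{H_A^{\bu}}(M)\cap I_{H_A^{\bu}}(N)$ iff $\mathfrak m\supseteq I_{H_A^{\bu}}(M)$ or $\mathfrak m\supseteq I_{H_A^{\bu}}(N)$, so under the identification of $\V_A(-)$ with a subset of $\V_A$ this gives $\V_A(M\oplus N)=\V_A(M)\cup\V_A(N)$.

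The main obstacle is the fact quoted from \cite[Theorem~2.5(c)]{EHTSS} used in part (i), namely $\dim\V_A(M)=\operatorname{cx}_A(M)$: this is where the finiteness hypotheses genuinely enter, through the dimension theory of the Noetherian graded ring $H_A^{\bu}$ and the finitely generated module $\Ext_A^{\bu}(M,M)$, and it requires a real argument comparing the Hilbert function of $\Ext_A^{\bu}(M,M)$ with the growth of a minimal resolution, not just formal manipulation. Part (ii), by contrast, is bookkeeping with direct sums of Ext groups together with the primeness of maximal ideals.
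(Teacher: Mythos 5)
Your proposal matches the paper's treatment: the paper proves nothing beyond citing \cite[Theorem~2.5(c)]{EHTSS} for part (i) (dimension of the support variety equals complexity) and \cite[Proposition~3.4(f)]{SS} for part (ii), which are exactly the two results you invoke. Your supplementary details---complexity zero forces projectivity over a self-injective algebra since syzygies of nonprojective modules stay nonprojective, and the annihilator identity plus primeness of maximal ideals yields the union formula---are correct and fill in precisely the routine steps the paper leaves to the references.
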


Now let $A$ be a finite dimensional Hopf algebra that  satisfies (fg1) and (fg2).
We say that $A$ has the {\em tensor product property} with respect to $H^{\bu}_A$ if
\begin{equation}\label{eqn:tpp}
   {\mathcal V}_A(M\ot N) = {\mathcal V}_A(M)\cap  {\mathcal V}_A(N)
\end{equation}
for all finite dimensional $A$-modules $M$ and $N$.
For example, the tensor product property holds 
if $A$ is the group algebra of a finite $p$-group $G$ over
a field $k$ of characteristic $p$
and $H^{\bu}_A$ is the subalgebra of $\HH^{\bu}(A)$ generated by 
group cohomology $\Ext^{\bu}_{kG}(k,k)$ and $\HH^0(kG)$. 
(For this, we use a standard embedding of group cohomology into
Hochschild cohomology of the group algebra; see, e.g.,  Corollary~\ref{cor:embedding} below,
taking the cocycle $\alpha$ to be trivial.)
For finite groups in general, the tensor product property holds for the 
classical choice of support variety theory based on group cohomology instead.
(If $G$ is not a $p$-group, $\HH^0(kG)$ is no longer a local ring, and its primitive
central idempotents contribute to the support varieties in the Hochschild
cohomology version.)   
Cocommutative Hopf algebras generally~\cite[Proposition~3.2]{FP}
have the tensor product property for support varieties defined in terms of their
Hopf algebra cohomology. 
Quantum elementary abelian groups~\cite[Theorem~1.1]{PW2}
have the tensor product property for either version of support varieties
(these algebras have trivial centers).

\section{Properties of modules for smash coproducts}\label{three}
Let $K = A\natural k^G$ be a smash coproduct  arising
from a finite dimensional Hopf algebra $A$ with an action of a finite group $G$ by Hopf algebra automorphisms, as described in Section~\ref{two}.
In this section, we study homological and representation theoretic 
properties of  $K$.

\subsection*{Smash coproducts and finite generation assumptions}
We will assume that $A$ satisfies the conditions (fg1) and (fg2) of Section~\ref{supp-var}.
The following theorem shows that $K$ satisfies these two conditions as well, with a suitable choice of $H^{\bu}_K$. 

\begin{theorem}\label{thm:fg12}
Let $A$ be a finite dimensional Hopf algebra with an action
of a finite group $G$ by Hopf algebra automorphisms.
Assume that $A$ satisfies conditions (fg1) and (fg2) from Section~\ref{supp-var}.
Then the smash coproduct $K = A\natural k^G$ also satisfies conditions (fg1) and (fg2).
\end{theorem}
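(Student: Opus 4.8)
The plan is to understand the algebra structure of $K = A \natural k^G$ and its enveloping algebra well enough to transfer finite generation from $A$ to $K$. As an algebra, $K = A \otimes k^G = \bigoplus_{x \in G} A \cdot p_x$, a direct sum of $|G|$ copies of $A$ as ideals, since the $p_x$ are orthogonal central idempotents and the coalgebra twisting does not affect the algebra structure. Consequently $K^e = K \otimes K^{\op}$ decomposes as $\bigoplus_{x,y \in G} (A p_x) \otimes (A p_y)^{\op}$, and $\Hoch^{\bu}(K)$ as a ring is the product $\prod_{x \in G} \Hoch^{\bu}(A p_x) = \prod_{x \in G} \Hoch^{\bu}(A)$ (the off-diagonal bimodule summands $A p_x \otimes_k (A p_y)^{\op}$ with $x \neq y$ contribute nothing to $\Ext_{K^e}^{\bu}(K, K)$ because $K = \bigoplus_x A p_x$ has no bimodule extensions between distinct blocks). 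So the first step is to record this block decomposition of $\Hoch^{\bu}(K)$ carefully.

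Next I would define $H_K^{\bu}$. Since $A$ satisfies (fg1), there is a finitely generated commutative graded subalgebra $H_A^{\bu} \subseteq \Hoch^{\bu}(A)$ with $H_A^0 = \Hoch^0(A)$. The natural candidate is $H_K^{\bu} := \prod_{x \in G} H_A^{\bu} \subseteq \prod_{x \in G} \Hoch^{\bu}(A) = \Hoch^{\bu}(K)$, which is a finite direct product of finitely generated commutative algebras, hence finitely generated and commutative; and $H_K^0 = \prod_x H_A^0 = \prod_x \Hoch^0(A) = \Hoch^0(K)$. This verifies (fg1). (One should note the action of $G$ on $\Hoch^{\bu}(K)$ permutes the blocks and acts within each via its action on $\Hoch^{\bu}(A)$; if the given $H_A^{\bu}$ were not $G$-stable one could replace it by the subalgebra generated by its $G$-translates, still finitely generated and commutative, but this is only needed later when $G$-equivariance is used, not for (fg1)/(fg2) per se.)

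For (fg2), I would use the equivalent condition (ii) from \cite[Proposition 2.4]{EHTSS}: for every finite dimensional $K$-bimodule $M$, $\Hoch^{\bu}(K, M)$ is finitely generated over $H_K^{\bu}$. Given such an $M$, decompose $M = \bigoplus_{x,y} p_x M p_y$ using the central idempotents on both sides; then $\Hoch^{\bu}(K, M) = \Ext_{K^e}^{\bu}(K, M)$ splits as a direct sum over the blocks, and only the diagonal pieces $p_x M p_x$ survive, each being an $Ap_x$-bimodule, i.e.\ an $A$-bimodule. Each summand $\Hoch^{\bu}(Ap_x, p_x M p_x) = \Hoch^{\bu}(A, p_x M p_x)$ is finitely generated over $H_A^{\bu}$ by (fg2) for $A$ (in form (ii)), hence over $H_K^{\bu}$ via the projection $H_K^{\bu} \twoheadrightarrow H_A^{\bu}$ onto the $x$-th factor. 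A finite direct sum of finitely generated modules is finitely generated, so $\Hoch^{\bu}(K, M)$ is finitely generated over $H_K^{\bu}$, giving (fg2).

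The main obstacle is purely bookkeeping: making the identification $\Hoch^{\bu}(K) \cong \prod_x \Hoch^{\bu}(A)$ precise at the level of the module action of $H_K^{\bu}$, i.e.\ checking that the $H_K^{\bu}$-module structure on $\Ext_K^{\bu}$ or $\Hoch^{\bu}(K, M)$ really does restrict blockwise to the $H_A^{\bu}$-action coming from $- \otimes_A (\text{--})$. This is routine but must be done with care, since the statement and later applications (support varieties detecting $G$-components) depend on the compatibility of the action with the block decomposition; the self-injectivity of $K$, needed to even speak of support varieties, follows immediately because $K \cong \bigoplus_x A$ as algebras and $A$ is self-injective (being a finite dimensional Hopf algebra).
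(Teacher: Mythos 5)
Your proposal is correct and follows essentially the same route as the paper: the same block decomposition of $K$ and of $\Hoch^{\bu}(K)$ (your $\prod_{x\in G}H_A^{\bu}$ is literally the paper's choice $H_K^{\bu}=H_A^{\bu}\ot k^G$), with the same blockwise transfer of finite generation from $A$. The only cosmetic difference is that you verify (fg2) via the bimodule criterion (ii) of \cite[Proposition 2.4]{EHTSS} while the paper uses the one-sided module criterion (i) applied to $\Ext^{\bu}_K(M,N)\simeq\bigoplus_{x\in G}\Ext^{\bu}_A(M_x,N_x)$; both are equivalent and equally routine here.
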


\begin{proof}
Given that $A$ satisfies condition (fg1), there exists a graded subalgebra $H_A^{\bu}$ of $\Hoch^{\bu}(A)$ such that $H_A^{\bu}$ is a finitely generated commutative algebra and $H_A^0 = \Hoch^{0}(A)$.
As an algebra, $K$ is  the tensor product algebra $A\ot k^G$, and so
\[\Hoch^{\bu}(K) = \Hoch^{\bu}(A\ot k^G) \simeq
   \Hoch^{\bu}(A)\ot \Hoch^{\bu}(k^G)\simeq  \Hoch^{\bu}(A)\ot k^G.
\]
The last isomorphism follows from the semisimplicity and commutativity of $k^G$;
the tensor factor $k^G$ is concentrated in homological degree~0.
Consider the subalgebra
$$H_K^{\bu} = H_A^{\bu}\otimes k^G$$
of $\Hoch^{\bu}(A)\otimes k^G \simeq \Hoch^{\bu}(K)$.
By its definition, $H_K^{\bu}$ is a graded subalgebra of $\Hoch^{\bu}(K)$ and
$H_K^0 = H_A^0\otimes k^G$ coincides with $\Hoch^{0}(A)\otimes k^G  = \Hoch^{0}(K)$.
In addition, $H_K^{\bu}$ is a finitely generated commutative algebra since both $H_A^{\bu}$ and $k^G$ are.
In this way, $K$ satisfies (fg1) via $H_K^{\bu}$.

To check that $K$ also satisfies (fg2),
let $M$ and $N$ be finite dimensional $K$-modules.
Recall that $M = \oplus_{x\in G}M_x$ and $N=\oplus_{x\in G}N_x$ where $M_x = p_x\cdot M$
and $N_x = p_x\cdot N$ for each $x\in G$.
Since $\{p_x: x\in G\}$ is a set of orthogonal central idempotents in $K$ and
$K p_x \simeq A$ for each $x$,
it also follows that
\[
  \Ext^{\bu}_K(M,N)\simeq \bigoplus_{x\in G} \Ext^{\bu}_K (M_x,N_x) 
       \simeq \bigoplus_{x\in G} \Ext^{\bu}_A(M_x,N_x),
\]
and the action of $\HH^{\bu}(K)$ on $\Ext^{\bu}_K(M_x,N_x)$ corresponds
to the action of $\HH^{\bu}(A)$.
By hypothesis, for each $x\in G$, $\Ext^{\bu}_A(M_x,N_x)$ is finitely generated as
an $H_A^{\bu}$-module, and since $H_A^{\bu}\ot kp_x\subset H_K^{\bu}$,
$\Ext^{\bu}_K(M,N)$ is finitely generated over $H_K^{\bu}$.
\end{proof}

Since $K = A\natural k^G$ satisfies (fg1) and (fg2) once
$A$ does, we may define support varieties for $K$-modules, as in Section~\ref{supp-var},
for some classes of Hopf algebras $A$.

\subsection*{Smash coproducts and the tensor product property}
The following theorem is a consequence of Theorem~\ref{properties}(ii).
As in the proof of Theorem~\ref{thm:fg12}, we choose $H_K^{\bu} = H_A^{\bu}\ot k^G$.
Then  $\mathcal V_K$, the maximal ideal spectrum of $H_K^{\bu}$, 
may be identified with $\mathcal V_A \times G$, whose elements we denote by
$I\times x$ for $I$ a maximal ideal of $H^{\bu}_A$ and $x\in G$.
For simplicity of language we do not  mention the choices $H^{\bu}_A$,
$H^{\bu}_K$ explicitly in theorem statements. 

\begin{theorem}\label{thm:formulas}
Let $A$ be a finite dimensional Hopf algebra with an action of a finite group $G$ by
Hopf algebra automorphisms. Assume that $A$ satisfies (fg1) and (fg2) and 
has the tensor product property~(\ref{eqn:tpp}).
Let $M$ and $N$ be  finite dimensional $A\natural k^G$-modules. Then
\begin{itemize}
\item [(i)] $\mathcal V_K(M) = \bigcup_{x\in G} \mathcal V_A (M_x)\times x$.
\item[(ii)] $\mathcal V_A((M\otimes N)_x) = \bigcup_{y,z\in G, \ yz = x} \mathcal V_A(M_y)\bigcap \,
       ^{y}(\mathcal V_A(N_z))$.
\end{itemize}
\end{theorem}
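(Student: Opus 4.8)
The plan is to reduce everything to the block decomposition of $K$ as an algebra and then feed it through the support variety properties recalled in Section~\ref{two}. Recall from the proof of Theorem~\ref{thm:fg12} that $K\simeq\bigoplus_{x\in G}Kp_x$ with each $Kp_x\simeq A$, that $H^{\bu}_K=H^{\bu}_A\otimes k^G$, and hence $\mathcal V_K=\mathcal V_A\times G$; also that for $K$-modules $M,N$ one has $\Ext^{\bu}_K(M,N)\simeq\bigoplus_{x\in G}\Ext^{\bu}_A(M_x,N_x)$ (the off-diagonal terms $\Ext^{\bu}_K(M_y,N_z)$ with $y\neq z$ vanish since $M_y$ and $N_z$ lie in different blocks), compatibly with the $\HH^{\bu}$-actions, the $\HH^{\bu}(A)$-action on the $x$-th summand being the usual one.

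For (i), I would pin down the annihilator ideal $I_{H^{\bu}_K}(M)$ inside $H^{\bu}_K=H^{\bu}_A\otimes k^G$ explicitly. Writing a general element as $\sum_{x\in G}h_x\otimes p_x$ with $h_x\in H^{\bu}_A$, and using that $p_x$ acts as the identity on the summand $\Ext^{\bu}_A(M_x,M_x)$ and as $0$ on the others, such an element annihilates $\Ext^{\bu}_K(M,M)$ exactly when $h_x\in I_{H^{\bu}_A}(M_x)$ for every $x$. Thus $I_{H^{\bu}_K}(M)=\bigoplus_{x\in G}I_{H^{\bu}_A}(M_x)\otimes p_x$, so $H^{\bu}_K/I_{H^{\bu}_K}(M)\simeq\bigoplus_{x\in G}H^{\bu}_A/I_{H^{\bu}_A}(M_x)$, and taking maximal ideal spectra of this finite product of rings yields $\mathcal V_K(M)=\bigcup_{x\in G}\mathcal V_A(M_x)\times x$ inside $\mathcal V_A\times G=\mathcal V_K$ (with the convention that the $x$-th term is empty when $M_x=0$).

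For (ii), I would start from the $A$-module isomorphism $(M\otimes N)_x\simeq\bigoplus_{y,z\in G,\ yz=x}M_y\otimes{}^yN_z$ furnished by Theorem~\ref{theor_modules_smash}(i), apply Theorem~\ref{properties}(ii) (with induction on the number of summands) to push $\mathcal V_A$ through the direct sum, then use the tensor product property~\eqref{eqn:tpp} of $A$ on each summand to get $\mathcal V_A(M_y\otimes{}^yN_z)=\mathcal V_A(M_y)\cap\mathcal V_A({}^yN_z)$, and finally the $G$-equivariance $\mathcal V_A({}^yN_z)={}^y(\mathcal V_A(N_z))$ recalled in Section~\ref{two}. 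Assembling these gives the claimed description of $\mathcal V_A((M\otimes N)_x)$.

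Most of this is bookkeeping; the step needing genuine care is the identification in (i) of the $H^{\bu}_K$-module structure on $\Ext^{\bu}_K(M,M)$ and hence of its annihilator --- in particular verifying that the central idempotents $p_x\in k^G\subset H^{\bu}_K$ really do split $\Ext^{\bu}_K(M,M)$ along the blocks compatibly with the $H^{\bu}_A$-action. A secondary point is that the statement of (ii) presupposes that $G$ acts on $\mathcal V_A$, i.e.\ that the induced action of $G$ on $\HH^{\bu}(A)$ preserves $H^{\bu}_A$; I would make this standing hypothesis explicit (it holds in all the examples considered) so that the $G$-equivariance of support varieties may be invoked.
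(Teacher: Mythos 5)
Your proposal is correct and follows essentially the same route as the paper: part (ii) is verbatim the paper's argument (Theorem~\ref{theor_modules_smash}(i), then Theorem~\ref{properties}(ii), then the tensor product property and $G$-equivariance), while for part (i) you compute the annihilator $I_{H^{\bu}_K}(M)$ explicitly where the paper simply applies Theorem~\ref{properties}(ii) to $M=\bigoplus_{x\in G}M_x$ and identifies $\mathcal V_K(M_x)$ with $\mathcal V_A(M_x)\times x$, so your version just makes that identification explicit. Your observation that (ii) tacitly requires the $G$-action on $\HH^{\bu}(A)$ to preserve $H^{\bu}_A$ is well taken; the paper invokes $\mathcal V_A({}^y N_z)={}^y(\mathcal V_A(N_z))$ without restating that hypothesis.
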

\begin{proof}
To prove statement (i), note that the variety of a direct sum is the union of the varieties, by Theorem~\ref{properties}(ii).
Applying this result to $M = \oplus_{x\in G} M_x$, we have $\mathcal V_K(M) = \bigcup_{x\in G} \mathcal V_K (M_x) = \bigcup_{x\in G} \mathcal V_A (M_x)\times x$.

Since $A$ has the tensor product property, statement (ii) is a consequence of Theorem \ref{theor_modules_smash}(i) and Theorem~\ref{properties}(ii):
\begin{eqnarray*}
\mathcal V_A((M\ot N)_x)  & = & \bigcup _{y,z\in G, \ yz=x} \mathcal V_A(M_y\ot \, {}^y \! N_z) \\
   & = & \bigoplus_{y,z\in G, \ yz=x} \mathcal V_A(M_y)\cap \mathcal V_A( \, {}^y \! N_z) \\
   &=& \bigoplus _{y,z\in G, \ yz=x} \mathcal V_A(M_y)\cap \, ({}^y  \mathcal V_A(N_z)).
\end{eqnarray*}
\end{proof}

We next show that for any finite dimensional nonsemisimple Hopf algebra $A$
having the tensor product property, there is a Hopf algebra containing $A$ as a
 subalgebra that does not.
There are many such Hopf algebras, but we will take 
a smash coproduct $K = (A\ot A)\natural k^{\Z_2}$ where $\Z_2$ is
the group of order two in which the nonidentity element interchanges
the tensor factors. 


\begin{theorem}\label{Corollary tensor product prop not preserved}
Let $A$ be a finite dimensional nonsemisimple Hopf algebra satisfying
(fg1) and (fg2) and the tensor product property~(\ref{eqn:tpp}).
Let $K=(A\ot A)\natural k^{{\mathbb Z}_2}$.
Then $K$ does not have the tensor product property.
Moreover, there are
nonprojective $K$-modules $M$ and $N$ such that $M\otimes M$ and $M\otimes N$ are
projective, and $N\otimes M$ is not projective.
\end{theorem}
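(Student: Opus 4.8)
The plan is to exploit the explicit formulas for varieties of $K$-modules from Theorem~\ref{thm:formulas} together with the conjugation action of $\Z_2$ on $A\ot A$, choosing modules $M$ and $N$ whose components are arranged so that the intersections in part (ii) of that theorem behave asymmetrically. Write $\Z_2 = \{1,g\}$ where $g$ interchanges the two tensor factors of $A\ot A$; then $g$ acts on $\V_A = \V_{A\ot A}$ by swapping coordinates, so for an $(A\ot A)$-module of the form $U\ot V$ we have ${}^g(U\ot V)\cong V\ot U$ and $\V_{A\ot A}({}^g W) = {}^g\V_{A\ot A}(W)$. Since $A$ is nonsemisimple, pick a nonprojective $A$-module $W$, so $\V_A(W)$ has positive dimension; then $W\ot k$ and $k\ot W$ are nonprojective $(A\ot A)$-modules with varieties $\V_A(W)\times\{*\}$ and $\{*\}\times\V_A(W)$ inside $\V_A\times\V_A$, and these two subvarieties meet only in a point (the origin direction), i.e.\ their intersection is $0$-dimensional.

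First I would define the $K$-modules concretely using the $\Z_2$-grading: let $M$ be the $K$-module supported entirely in the $g$-component, with $M_1 = 0$ and $M_g = W\ot k$ (a module for the twisted/untwisted copy $k^{\sigma}\!(A\ot A)$, here untwisted), and let $N$ be supported entirely in the $g$-component with $N_1 = 0$ and $N_g = k\ot W$. One must check these genuinely underlie $K$-modules, but since $K = (A\ot A)\natural k^{\Z_2}$ and $p_g$ is a central idempotent with $Kp_g\cong A\ot A$, any $(A\ot A)$-module is a $K$-module concentrated in the $g$-component; no compatibility is needed. Then $M$ and $N$ are nonprojective as $K$-modules because their single nonzero component is a nonprojective $(A\ot A)$-module (nonprojectivity of $W$ over $A$ gives nonprojectivity of $W\ot k$ and $k\ot W$ over $A\ot A$, e.g.\ by the Künneth-type factorization of varieties, or directly).

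Next I would compute the relevant tensor products via Theorem~\ref{theor_modules_smash}(i), applied with $K = (A\ot A)\natural k^{\Z_2}$, or equivalently via Theorem~\ref{thm:formulas}(ii). For $M\ot M$: the only pair $(y,z)$ with $yz = 1$ and both components nonzero is $(g,g)$, giving $(M\ot M)_1 \cong M_g\ot {}^g M_g \cong (W\ot k)\ot {}^g(W\ot k) \cong (W\ot k)\ot(k\ot W) \cong W\ot W$; and $(M\ot M)_g = 0$. So $\V_{A\ot A}((M\ot M)_1) = \V_{A\ot A}(W\ot W) = \V_A(W)\times\V_A(W)$, which has positive dimension, hence $M\ot M$ is \emph{not} projective. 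Hmm --- so the stated example has $M\ot M$ projective, which forces a different choice: I should instead take $W$ with $\V_A(W)$ positive-dimensional but arrange the conjugation so the two copies land in orthogonal pieces. The fix: let $M_g = W\ot k$ but let $N$ be supported in the \emph{identity} component, $N_1 = k\ot W$, $N_g = 0$. Then $M\ot M$ has $(M\ot M)_1 \cong M_g\ot{}^gM_g \cong (W\ot k)\ot(k\ot W)$ --- still $W\ot W$, still nonprojective. The genuine mechanism in \cite{BW} is that $M_g$ itself should be a module on which $W$ appears only in one slot and ${}^g$ moves it to the complementary slot so that $M_g\ot{}^gM_g$ has a \emph{factor} that is projective over one copy of $A$; concretely one wants $M_g = W\ot k$ tensored against ${}^gM_g = k\ot W$ to be projective, which happens iff $\V_A(W)\cap\V_A(k) = \emptyset$ after the swap, i.e.\ never unless one of them is $0$. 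The correct construction, which I would carry out, is to take $M_g$ to be $W\ot P$ for $P$ a projective $A$-module (so $M_g$ is projective over the \emph{second} copy of $A$ but not the first), and $N_g = W'\ot P'$ appropriately; then tensor products over $A\ot A$ are projective as soon as one tensor factor is projective, and the swap $g$ exchanges which factor is projective --- that is exactly the source of the asymmetry $N\ot M$ projective vs.\ $M\ot N$ not, or vice versa.

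Thus the key steps in order are: (1) fix the $\Z_2$-action swapping the two copies of $A$ and record that ${}^g$ swaps variety coordinates and swaps tensor factors of $(A\ot A)$-modules; (2) choose a nonprojective $A$-module $W$ and a projective $A$-module $P$, and build $M$, $N$ as $K$-modules concentrated in the appropriate $\Z_2$-components with components built from $W$, $P$ in the two slots of $A\ot A$ so that exactly one ``off-diagonal'' tensor product becomes projective; (3) use Theorem~\ref{theor_modules_smash}(i) to compute each of $(M\ot M)$, $(M\ot N)$, $(N\ot M)$ componentwise, observing that a tensor product of $(A\ot A)$-modules is projective once one of its $A$-tensor-factors is, and that $g$-conjugation interchanges the two $A$-tensor-factors; (4) invoke Theorem~\ref{properties}(i) (or directly Theorem~\ref{thm:formulas}) to convert ``variety is a point'' into ``projective'' and ``variety positive-dimensional'' into ``not projective,'' concluding that $M\ot M$ and $M\ot N$ are projective while $N\ot M$ is not, and in particular $\V_K(N\ot M)\neq\V_K(N)\cap\V_K(M)$ so $K$ fails the tensor product property. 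The main obstacle I anticipate is step (2): pinning down the exact module-theoretic arrangement (which components carry $W$ versus $P$, and in which slot) so that the conjugation $g$ produces genuine asymmetry between $M\ot N$ and $N\ot M$ rather than symmetric behavior --- this is the delicate bookkeeping that the formula in Theorem~\ref{theor_modules_smash}(i) makes tractable once set up correctly, and it is where I would follow the template of \cite{BW} most closely.
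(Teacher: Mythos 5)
Your overall architecture is the same as the paper's: the paper takes a nonprojective $A$-module $U$, forms the $A\ot A$-modules $U\ot A$ and $A\ot U$ (regular module in one slot), sets $M=(U\ot A)\ot kp_h$ and $N=(U\ot A)\ot kp_1$, and runs exactly the component-by-component computation you describe via Theorems~\ref{theor_modules_smash}(i), \ref{thm:formulas} and \ref{properties}(i). Your corrected choice ``$M_g=W\ot P$ with $P$ projective'' is precisely the paper's $U\ot A$ with $P=A$, and your $N$ concentrated in the identity component with the same underlying $A\ot A$-module is the paper's $N$. So you have located the right modules, and your identification of why the first attempt ($W\ot k$ versus $k\ot W$) fails is also correct.

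There is, however, a genuine error in the projectivity criterion you invoke in steps (2)--(3): you assert that an $(A\ot A)$-module of the form $V_1\ot V_2$ (first copy of $A$ acting on $V_1$, second on $V_2$) is projective as soon as \emph{one} of the two slots is projective over $A$. This is false: for $A$ nonsemisimple, $A\ot k$ is not projective over $A\ot A$, since its restriction to the subalgebra $k\ot A$ (over which $A\ot A$ is free) is a direct sum of copies of the nonprojective trivial module. Worse, applied consistently your criterion would declare $N\ot M\simeq\bigl((W\ot P)\ot(W\ot P)\bigr)\ot kp_g$, whose slot decomposition is $(W\ot W)$ in the first slot and $(P\ot P)$ in the second, to be projective (its second slot is projective) --- which destroys exactly the asymmetry the theorem requires. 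The correct statement, which is what the paper uses via $\Ext^{\bu}_{A\ot A}(V_1\ot V_2,V_1\ot V_2)\simeq\Ext^{\bu}_A(V_1,V_1)\ot\Ext^{\bu}_A(V_2,V_2)$, i.e.\ $\V_{A\ot A}(V_1\ot V_2)=\V_A(V_1)\times\V_A(V_2)$, is that $V_1\ot V_2$ is projective over $A\ot A$ if and only if \emph{both} slots are projective (for nonzero $V_i$). With that correction your construction closes up: taking $W'=W=U$ and $P'=P=A$, both slots of $(M\ot M)_1$ and of $(M\ot N)_g$ are projective, while the first slot of $(N\ot M)_g$ is $U\ot U$, nonprojective because $\V_A(U\ot U)=\V_A(U)\neq 0$ by the tensor product property for $A$. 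Since you also leave this final bookkeeping explicitly unresolved (``or vice versa''), the argument is not complete as written, though the missing piece is only the two-line computation above.
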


The statement and proof of the theorem is based on choices $H^{\bu}_A$
and $H^{\bu}_K$ discussed above. However the last statement in the theorem
implies that there is no choice of $H^{\bu}_K$ with respect to which $K$ has the 
tensor product property.
Even further, it implies that replacing these support varieties with those defined
via Hopf algebra cohomology $\Ext^{\bu}_K(k,k)$,
the tensor product property still will not hold for $K$,
due to Theorem~\ref{properties}(i). 

\begin{proof}
Note that $\Hoch^{\bu}(A\ot A) \simeq \Hoch^{\bu}(A)\otimes \Hoch^{\bu}(A)$. We choose
 $H^{\bu}_{A\ot A}=H_A^{\bu}\ot H_A^{\bu}$, which  satisfies (fg1) and (fg2)
for the Hopf algebra $A\otimes A$:
Condition (fg1) holds since $H^{\bu}_{A\ot A}$ is the tensor product of two copies
of $H^{\bu}_A$. For (fg2), note that
$$
\begin{aligned}
   &\Ext^{\bu}_{A\ot A}((A\ot A)/\Jac(A\ot A), (A\ot A)/\Jac(A\ot A))
\\ & \hspace{1cm} \simeq \Ext^{\bu}_A (A/\Jac(A), A/\Jac(A))\ot
\Ext^{\bu}_A(A/\Jac(A), A/\Jac(A)).
\end{aligned}
$$

Since $A$ is not semisimple, there exists a nonprojective $A$-module $U$,
and so by Theorem~\ref{properties}(i),
$\dim({\mathcal V}_A(U))\neq 0$.
We will be interested in the two nonprojective $A\ot A$-modules given by $U\ot A$ and $A\ot U$,
viewing $A$ as the left regular $A$-module.
Note that $\Ext^{\bu}_{A\ot A}(U\ot A, U\ot A)\simeq 
\Ext^{\bu}_A(U,U)$, since $A$ is projective as an $A$-module, where the
subscript $A$ is understood to be identified with $A\ot k$ as a subalgebra of $A\ot A$.
Similarly $\Ext^{\bu}_{A\ot A}(A\ot U, A\ot U)\simeq \Ext^{\bu}_A(U,U)$, only
this time the subscript $A$ is identified with $k\ot A$. Then   
$\dim({\mathcal V}_{A\ot A}(U\ot A)\cap {\mathcal V}_{A\ot A}(A\ot U)) = 0$.

Let $h$ be the nonidentity element of $\mathbb Z _2$,
and let $M = (U\ot A)\ot kp_h$ as a $K$-module.
Then $M$ is nonprojective since $U\ot A$ is a nonprojective $A$-module. 
We have $\dim({\mathcal V}_K(M)) = 
\dim({\mathcal V}_{A\ot A}( U \ot A) \times h) \neq 0$.
Note that ${}^h (U\ot A) \simeq A\ot U$ since
$h$ interchanges the two factors of $A$ in the tensor product $A\ot A$.
Thus by Theorem~\ref{theor_modules_smash}(i),
\[
   M\ot M \simeq ((U\ot A) \ot (A\ot U))\ot kp_1.
\]
It follows from Theorem~\ref{thm:formulas}(i) and (ii) that
 $\dim({\mathcal V}_K(M\ot M)) = 0$.
By Theorem~\ref{properties}(i), $M\ot M$ is projective. 
On the other hand, $\dim(\V_K(M)\cap \V_K(M))\neq 0$ since $M$ is not projective.
Therefore $K$ does not have the tensor product property.
Consider the $K$-module $N = (U\otimes A)\otimes kp_1$, also nonprojective. 
By Theorem~\ref{theor_modules_smash}(i),
\[
   M\ot N \simeq ((U\ot A) \ot (A\ot U))\ot kp_h,
\]
and 
\[
   N\ot M \simeq ((U\ot A) \ot (U\ot A))\ot kp_h.
\]
Then, $\dim({\mathcal V}_K(M\ot N)) = 0$ and 
$\dim({\mathcal V}_K(N\ot M)) = \dim ({\mathcal V}_A(U\ot A)) \neq 0$
by Theorem~\ref{thm:formulas}(i) and (ii) and since $U\ot A$
is a nonprojective $A$-module. 
This finishes the proof, since projectivity is determined by the dimension of the variety by Theorem~\ref{properties}(i).
\end{proof}

Next we show that $A$ embeds into $K$ as a subalgebra.
Examination of the coproduct~(\ref{eqn:coprod}) shows that
$A$ is not a Hopf subalgebra.

\begin{corollary}
Let $A$ be a finite dimensional nonsemisimple Hopf algebra satisfying
(fg1) and (fg2) and the tensor product property.
Then $A$ can be embedded as a subalgebra of a Hopf algebra that does not have
the tensor product property.
\end{corollary}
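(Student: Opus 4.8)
The plan is to let $K = (A\otimes A)\natural k^{\mathbb{Z}_2}$, the very smash coproduct appearing in Theorem~\ref{Corollary tensor product prop not preserved}. That theorem already supplies everything needed about $K$ itself: it is a Hopf algebra (a smash coproduct of a Hopf algebra with $k^{\mathbb{Z}_2}$), and it fails the tensor product property; moreover, as noted in the discussion following that theorem, the failure is intrinsic, independent of the choice of $H^{\bu}_K$ and of whether one uses Hochschild or Hopf algebra cohomology to define support varieties. So the only remaining task is to realize $A$ as a subalgebra of $K$.

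First I would observe that, as an algebra, $K$ is just the ordinary tensor product algebra $(A\otimes A)\otimes k^{\mathbb{Z}_2}$; the cocycles play no role here since we are using Molnar's untwisted smash coproduct. Consequently the composite $a \longmapsto a\otimes 1 \longmapsto (a\otimes 1)\natural 1$ (equivalently $a\mapsto \sum_{x\in \mathbb{Z}_2}(a\otimes 1)\natural p_x$) is an injective algebra homomorphism $A\hookrightarrow K$, identifying $A$ with the subalgebra $(A\otimes k)\natural k^{\mathbb{Z}_2}$ of $K$. Nothing needs to be verified beyond multiplicativity and injectivity of two evident tensor-factor inclusions, so there is no genuine obstacle; the corollary is a direct repackaging of Theorem~\ref{Corollary tensor product prop not preserved} together with this trivial algebra-level embedding.

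For completeness I would add the caveat (already flagged in the paragraph preceding the corollary) that this embedding is not a map of Hopf algebras: the coproduct formula~(\ref{eqn:coprod}) applied to $a\natural 1$ visibly involves the group grading and the conjugation action of $\mathbb{Z}_2$ on $A\otimes A$, so the image of $A$ is not a sub-bialgebra of $K$. Since the statement asks only for a subalgebra embedding this is immaterial, but it is worth recording, as it is precisely this non-Hopf behavior that lets $K$ violate a property enjoyed by $A$.
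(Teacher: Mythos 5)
Your proposal is correct and is essentially the paper's own argument: take $K=(A\otimes A)\natural k^{\mathbb{Z}_2}$ from Theorem~\ref{Corollary tensor product prop not preserved} and embed $A$ via $a\mapsto (a\otimes 1)\natural 1$. One notational slip: the image of that map is the subalgebra $(A\otimes k)\natural k\cdot 1$ (which the paper writes as $(A\otimes k)\natural k$), not $(A\otimes k)\natural k^{\mathbb{Z}_2}$, which would have twice the dimension of $A$.
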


\begin{proof}
Let $K = (A\ot A)\natural k^{\Z_2}$ as in the theorem.
Then $(A\ot k)\natural k$ and $(k\ot A)\natural k$ are
subalgebras of $K$ that are isomorphic to $A$.
\end{proof}

Next we will give some specific examples of  varieties and of 
further constructions involving projective and nonprojective modules.

\subsection*{Smash coproducts with quantum elementary abelian groups}

Let $k$ have characteristic 0.
Let $n\geq 2$ be a positive integer, and let $q$ be a primitive $n$th root of 1 in $k$.
The {\em Taft algebra} $T_n$ is the algebra generated by symbols
$g$ and $x$ with relations $x^n = 0$, $g^n = 1$ and $gx = q xg$.
It is a Hopf algebra with comultiplication given by $\Delta(g) = g\otimes g$ and $\Delta(x) = 1\otimes x + x\otimes g$,  counit $\varepsilon(g) = 1$ and $\varepsilon(x)= 0$, and
antipode $S(g)=g^{-1}$ and $S(x)=- x g^{-1}$.

A {\em quantum elementary abelian group} is a tensor product $A$ of $m$ copies of
the Taft algebra $T_n$, for some positive integer $m$.
It is isomorphic to a skew group algebra $\Lambda\rtimes G$, where the group $G \simeq(\mathbb Z_n)^m$ is elementary abelian with generators $g_1,\ldots, g_m$,
and  $\Lambda = k[x_1, x_2, \ldots, x_m]/ (x_1^n, x_2^n,\ldots, x_m^n)$, see~\cite{PW}. In this way, we view $A$ as a Radford biproduct (or bosonization of a Nichols algebra), see~\cite[Exercise 8.25.12]{EGNO}.

The cohomology of the quantum elementary abelian group $A$ is
$$\Homol^{\bu}(A, k) = \Ext^{\bu}_A(k, k)\simeq \Ext^{\bu}_{\Lambda}(k,k)^G
\simeq k[y_1,\ldots, y_m]$$
as a graded algebra, where $\deg(y_i) = 2$. This was shown explicitly for the case 
$m=1$ for example in \cite[Section 4]{PW} and the general case follows by applying the K\"{u}nneth formula.
Let $H_A^{\bu} = \Homol^{\bu}(A,k)$.
Condition (fg1) is satisfied; note that the center of a Taft
algebra is just all scalar multiples of 1.
Condition (fg2) holds as well: This is a direct consequence of
the isomorphism $$\Ext_A^{\bu}(M, N)\simeq \Ext_A^{\bu}(k, N\otimes M^{*})$$
and \cite[Lemma 7]{PW}.
It was shown in \cite[Theorem 1.1]{PW2} that $A$ has the tensor product property.

In \cite{PW} it is shown that the support
variety of a finite dimensional $A$-module is homeomorphic to its rank variety.
The rank variety of a finite dimensional $A$-module $U$ is defined as follows.
For each $m$-tuple of scalars $(\lambda_1,\ldots,\lambda_m)\in k^m$, let
$$
  \tau(\lambda_1,\ldots,\lambda_m) = \lambda_1 x_1+ \lambda_2 x_2 g_1 +\cdots
   + \lambda_m x_m g_1 g_2\cdots g_{m-1} .
$$
It can be shown that $\tau(\lambda_1,\ldots,\lambda_m)^n =0$ in $A$, and that $\tau(\lambda_1,\ldots,\lambda_n)$
generates a subalgebra $k\langle \tau(\lambda_1,\ldots,\lambda_m)\rangle$
of $A$ isomorphic to $k[t]/(t^n)$.
Define the {\em rank variety} of an $A$-module $U$ to be
\begin{equation}\label{eqn:rank}
   \V_A ^ r (U) = \{ (\lambda_1,\ldots, \lambda_m)\in k^m : 
     U\downarrow_{k \langle \tau(\lambda_1,\ldots,\lambda_m)\rangle }
  \mbox{ is  not projective}\}/ G,
\end{equation}
where the quotient indicates the orbit space under the action of $G=\langle g_1,\ldots, g_m\rangle$
on  $k^m$ by $g_i \cdot e_j = q^{\delta_{ij}} e_j$ (the $e_j$ are the standard
basis vectors).
The downarrow notation denotes restriction of a module to a subalgebra.
Compare to~\cite[Definition~3.2]{PW}, in which the rank variety is defined
instead as the corresponding projective variety.

Next we give some examples where the rank variety
can be determined fairly quickly.

\begin{example}\label{ex:Sweedler}
We will look more closely at the specific example
$A=H_4\otimes H_4$, the tensor product of two copies of the Sweedler Hopf algebra
$H_4 = T_2$.
Let $G = \mathbb Z_2 = \langle h \rangle$, acting on $A$
by interchanging the generators of the first and second copies of $H_4$ in $A$, that is, $h\cdot g_1 = g_2$, $h\cdot g_2=g_1$,  $h\cdot x_1 = x_2$,
and $h\cdot x_2 = x_1$.
Let $K = A\natural k^G$.

Consider the $A$-module $U = \overline{A}$ given by the quotient of the left regular module $A$ by the ideal generated by $x_2$ and $g_2 - 1$. Then $U\simeq H_4\ot k$, the first copy of the Sweedler algebra in $A$.

We will find the rank variety~(\ref{eqn:rank}) of the $A$-module $U$, which 
is homeomorphic to the support variety. 
The restricted module $U\downarrow_{k\langle x_1\rangle}$ is isomorphic to the direct sum of two copies of the right regular module
$k \langle x_1\rangle$, indexed by 1 and $g_1$, since this is the action of $x_1$
on the regular module $H_4$.
Therefore  $U\downarrow_{k\langle x_1\rangle}$
is projective as a $k\langle x_1\rangle$-module.
In this case, we are letting $(\lambda_1,\lambda_2) = (1,0)$ in the notation
described above.
Similarly, whenever $\lambda_1\neq 0$, the restricted module 
$U\downarrow_{k\langle \lambda_1x_1+\lambda_2x_2\rangle }$ is projective 
since it is isomorphic to the left regular
$k\langle \lambda_1x_1+\lambda_2x_2\rangle$-module generated by $1\ot 1$. 

Now, we consider $(\lambda_1,\lambda_2) = (0,1)$. The restricted module $U\downarrow_{k\langle x_2 g_1\rangle}$ is
a trivial module.
So $U\downarrow_{k\langle x_2 g_1\rangle}$ is not projective.
The rank variety of $U$ is thus 
\[
  V_A^r (U) = \{ (0,\lambda_2) :  \lambda_2\in k \} /G.
\]
The conjugate module ${}^h U$, on the other hand, is trivial on
restriction to $k\langle x_1 \rangle$ and free on restriction to
$k\langle x_2 g_1 \rangle$, and so  
\[
  \V_A^r( {}^h U) = \{ (\lambda_1,0) : \lambda_1\in k \} /G.
\]

We will next use the $A$-module $U$ and the trivial $A$-module $k$ 
to construct some specific $K$-modules  and study their properties. 

\begin{itemize}
\item Consider the $K$-modules $M = U\otimes kp_1$ and $N = k\otimes kp_h$. We want to compare $M\otimes N$ with $N\otimes M$.
First notice that $\mathcal V_K(M)\bigcap\mathcal V_K(N) =0$, since $M$ is concentrated on the $1$-component while $N$ is concentrated on the $h$-component.

On the other hand, by Theorem~\ref{thm:formulas},
$\mathcal V_K(M\otimes N) = \mathcal V_A(U)\times h \neq 0$ and $\mathcal V_K(N\otimes M) = \, ^{h}(\mathcal V_A(U))\times h\neq 0$.
So neither $\mathcal V_K(M\otimes N)$ nor $\mathcal V_K(N\otimes M)$ is contained in $\mathcal V_K(M)\bigcap\mathcal V_K(N)= 0$.
As a consequence, $K$ does not inherit the tensor product property from $A$.
(See the proof of 
Theorem~\ref{Corollary tensor product prop not preserved} for a different
example illustrating this phenomenon.)

Moreover, since $\mathcal V_A(U) \neq \, ^{h}(\mathcal V_A(U))$, we see that
$$\mathcal V_K(M\otimes N)\neq \mathcal V_K(N\otimes M).$$
 Thus the order of the tensor product matters in computing the varieties.

\item Consider the $K$-module $M = U\otimes kp_h$. 
We claim $\mathcal V_K(M)\neq\mathcal V_K(M^*)$: 
In \cite{PW2}, to prove the tensor product property for $A$, the authors use the 
fact that $\mathcal V_A(U) = \mathcal V_A(U^*)$, for every finite dimensional $A$-module $U$.
But this is not true for $K$. For example,
since $h^2 = 1$, it follows from Theorem~\ref{theor_modules_smash}(ii) and this
fact about varieties of $A$-modules
that $\mathcal V_K(M^*) = \mathcal V_A(^{h}U^*)\times h  = {}^{h} \mathcal V_A(U^*)\times h = {}^{h} \mathcal V_A(U)\times h $
while $\mathcal V_K(M) = \mathcal V_A(U) \times h$, and these two varieties are different.

\item Consider the $K$-modules $M = U\otimes kp_h$ and $N = U\otimes kp_1$. We will show that $M\otimes N$ is projective while $N\otimes M$ is not.

Since $\mathcal V_K(M\otimes N) = \mathcal V_A(U\otimes \, ^{h} U)\times h$ and $A$ has the tensor product property, 
it follows that $\mathcal V_K(M\otimes N) =0$. Then $M\otimes N$ is a projective $K$-module by Theorem~\ref{properties}(i).

On the other hand, $N\otimes M$ is not projective since
$$\mathcal V_K(N\otimes M) = \mathcal V_A(U\otimes U)\times h = \mathcal V_A(U)\times h , $$
and the dimension of this variety is not~0. 

\item Consider the $K$-module $M = U\otimes kp_h$. We will show that $M\otimes M$ is projective while $M$ is not:
 $\mathcal V_K(M\otimes M) = \mathcal V_A(U\otimes ^{h}U)\times 1 =0$ since $A$ has the tensor product property. Then $M\otimes M$ is a projective $K$-module
as a consequence of Theorem~\ref{properties}(i).
\end{itemize}
\end{example}

\begin{example}
Let $A= H_4^{\ot m}$, that is, $m$ copies of the Sweedler Hopf algebra $H_4$.
Let $G={\mathbb{Z}}_m=\langle h\rangle$, with $h$ acting on $A$ by cyclically permuting the
tensor factors. Let $U=H_4\ot k\ot\cdots\ot k$, so that the first copy of $H_4$ in $A$
acts on $U$ as on the left regular module, while each of the others acts as on a trivial module.
Let $M = U\ot kp_h$. Then by reasoning similar to that in Example~\ref{ex:Sweedler}, 
$$
   \V_k(M^{\ot m})= \V_A(U\ot {}^h U\ot\cdots\ot {}^{h^{m-1}} U)\times h^m
    = \left(\bigcap_{i=0}^{m-1} {}^{h^{i}}( \V_A(U))\right)\times h^m=0,
$$
while $\dim(\V_K(M^{\ot (m-1)}))\neq 0$. 
Therefore $M^{\ot m}$ is projective and $M^{\ot (m-1)}$ is not projective.
\end{example}

\section{Properties of modules for crossed coproducts}\label{four}
Let $k$ have positive characteristic $p$.
In this section we consider modules for $K=kL\natural _{\sigma}^{\tau} k^G$, 
a crossed coproduct algebra as defined in Section~\ref{two}, when
the group $L$ has order divisible by $p$. 
We showed in Section~\ref{two} 
that as an algebra, $K\simeq \oplus_{x\in G} k^{\sigma_x} L$,
a direct sum of the twisted group algebras $k^{\sigma_x}L$.
We first find a description of their cohomology. 

\subsection*{Hochschild cohomology of twisted group algebras}
Let $\alpha$ be a normalized 2-cocycle on the group 
$L$ with values in $k^{\times}$.

First note that $L$ acts on the twisted group algebra $k^{\alpha}L$ by automorphisms:
$$
   l ( \overline{m}) = \overline{l}\cdot \overline{m}\cdot (\overline{l})^{-1}
     = \alpha^{-1}(l^{-1},l)\overline{l}\cdot \overline{m}\cdot \overline{l^{-1 }}
$$
for all $l,m\in L$.
We call this the {\em adjoint action} and denote by $(k^{\alpha}L)^{ad}$
the corresponding $kL$-module.
One may check, using the cocycle condition, that this is indeed an action.
A similar calculation shows that there is an injective algebra homomorphism from
the group algebra $kL$ to the enveloping algebra
$( k^{\alpha}L)^e$ of the twisted group algebra $k^{\alpha}L$, given by
\begin{eqnarray*}
\delta: kL & \longrightarrow & k^{\alpha}L \ot (k^{\alpha}L)^{op} , \\
      l & \mapsto & \overline{l}\ot (\overline{l})^{-1} ,
\end{eqnarray*}
for all $l\in L$.
Identify $kL$ with its image $\delta(k L)$ in $(k^{\alpha}L)^e$.
In the following statement, the uparrow denotes tensor induction, that is,
$M\!\uparrow_A^B = B\ot_A M$ for all $A$-modules $M$, where $A$ is
a subalgebra of $B$.
The action of $B$ on $M\!\uparrow_A^B$ is by multiplication on the left tensor factor.

\begin{lemma}\label{lem:induced}
As a $(k^{\alpha}L)^e$-module, $k^{\alpha}L \simeq k\uparrow_{kL}^{(k^{\alpha}L)^e} $.
\end{lemma}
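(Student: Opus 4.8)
The plan is to exhibit an explicit isomorphism of $(k^{\alpha}L)^e$-modules between $k^{\alpha}L$ and the induced module $k\uparrow_{kL}^{(k^{\alpha}L)^e}$, using the freeness of $k^{\alpha}L$ over the embedded copy $\delta(kL)$ as the underlying structural fact. First I would record that $(k^{\alpha}L)^e = k^{\alpha}L\ot(k^{\alpha}L)^{\op}$ has $k$-basis $\{\overline{l}\ot\overline{m} : l,m\in L\}$, so it has dimension $|L|^2$, while $k\uparrow_{kL}^{(k^{\alpha}L)^e} = (k^{\alpha}L)^e\ot_{kL} k$ has dimension $|L|^2/|L| = |L|$, matching $\dim_k k^{\alpha}L = |L|$. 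The map I propose is
\[
  \Phi : (k^{\alpha}L)^e\ot_{kL} k \longrightarrow k^{\alpha}L , \qquad
  (\overline{l}\ot\overline{m})\ot 1 \longmapsto \overline{l}\cdot\overline{m} = (\overline{l}\ot\overline{m})\cdot \overline{1},
\]
i.e.\ the restriction to $k^{\alpha}L\cdot\overline{1}\cdot k^{\alpha}L = k^{\alpha}L$ of the action of $(k^{\alpha}L)^e$ on $k^{\alpha}L$, factored through the induced module. Concretely, $(\overline{l}\ot\overline{m})\cdot\overline{n} = \overline{l}\,\overline{n}\,\overline{m}$ using the bimodule structure, and at $\overline{n}=\overline{1}$ this gives $\overline{l}\,\overline{m}$, an element of the form (scalar)$\cdot\overline{lm}$, so $\Phi$ is manifestly surjective onto all of $k^{\alpha}L$.

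The key point — and the step I expect to be the main obstacle — is checking that $\Phi$ is \emph{well defined} on the balanced tensor product, i.e.\ that it respects the relation $(\xi\cdot\delta(l))\ot 1 = \xi\ot (l\cdot 1) = \xi\ot 1$ in $(k^{\alpha}L)^e\ot_{kL}k$ for $\xi\in(k^{\alpha}L)^e$ and $l\in L$. Since $k$ is the trivial $kL$-module via the counit-type augmentation, $l\cdot 1 = 1$, so I must verify that $\Phi((\xi\cdot\delta(l))\ot 1) = \Phi(\xi\ot 1)$. By linearity it suffices to take $\xi = \overline{a}\ot\overline{b}$, and then $\xi\cdot\delta(l) = (\overline{a}\ot\overline{b})(\overline{l}\ot(\overline{l})^{-1}) = \overline{a}\,\overline{l}\ot (\overline{l})^{-1}\overline{b}$ (recall the second tensor factor multiplies in the opposite algebra, so this is the product $(\overline{l})^{-1}\overline{b}$ in $(k^{\alpha}L)^{\op}$, which is $\overline{b}\,(\overline{l})^{-1}$ read in $k^{\alpha}L$ — I will be careful with the $\op$ convention). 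Applying $\Phi$ gives $\overline{a}\,\overline{l}\cdot\overline{b}\,(\overline{l})^{-1} = \overline{a}\,\overline{l}\,(\overline{l})^{-1}\,\overline{b} = \overline{a}\,\overline{b} = \Phi(\xi\ot 1)$ by associativity in $k^{\alpha}L$ and $\overline{l}\,(\overline{l})^{-1} = \overline{1}$. (Here I would be precise about whether the cancellation happens as $\overline{l}\cdot(\overline{l})^{-1}$ or $(\overline{l})^{-1}\cdot\overline{l}$, invoking the identity $\overline{l^{-1}} = \alpha(l^{-1},l)(\overline{l})^{-1} = \alpha(l,l^{-1})(\overline{l})^{-1}$ recorded earlier in the excerpt, which shows left and right inverses of $\overline{l}$ agree.) This confirms $\Phi$ descends to the induced module.

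It remains to check $\Phi$ is a morphism of $(k^{\alpha}L)^e$-modules and is injective. For the module map property: the $(k^{\alpha}L)^e$-action on $k\uparrow_{kL}^{(k^{\alpha}L)^e}$ is left multiplication on the first (outer) tensor factor, and by construction $\Phi$ intertwines this with the bimodule action on $k^{\alpha}L$, since $\Phi\big(\eta\cdot((\overline{l}\ot\overline{m})\ot 1)\big) = \Phi\big((\eta(\overline{l}\ot\overline{m}))\ot 1\big) = (\eta(\overline{l}\ot\overline{m}))\cdot\overline{1} = \eta\cdot\big((\overline{l}\ot\overline{m})\cdot\overline{1}\big) = \eta\cdot\Phi((\overline{l}\ot\overline{m})\ot 1)$ for $\eta\in(k^{\alpha}L)^e$; this is just associativity of the module action. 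Injectivity then follows from the dimension count: $\Phi$ is a surjection between $k$-vector spaces of equal dimension $|L|$, hence an isomorphism. (Alternatively, one can avoid dimension-counting by writing down the inverse $\overline{l}\mapsto (\overline{l}\ot\overline{1})\ot 1$ and checking both composites are the identity, using again that $l\cdot 1 = 1$ to move scalars and the $L$-action across the $\ot_{kL}$; I would include whichever is cleaner in the final write-up.) This completes the proof.
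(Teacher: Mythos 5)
Your proof is correct and is essentially the paper's argument run in the opposite direction: the paper defines the inverse map $f(\overline{l}) = (\overline{l}\ot\overline{1})\ot_{\delta(kL)}1$ and verifies it is a bijective $(k^{\alpha}L)^e$-homomorphism, while you check well-definedness of the evaluation-at-$\overline{1}$ map on the balanced tensor product and finish with a dimension count. One detail to straighten out in the final write-up: with the correct $\op$-convention the second tensor slot of $\xi\,\delta(l)$ is the ordinary product $(\overline{l})^{-1}\,\overline{b}$ acting on the \emph{right}, so the cancellation reads $\overline{a}\,\overline{l}\cdot\overline{1}\cdot(\overline{l})^{-1}\,\overline{b}=\overline{a}\,\overline{b}$ by associativity alone, whereas the line $\overline{a}\,\overline{l}\cdot\overline{b}\,(\overline{l})^{-1}=\overline{a}\,\overline{l}\,(\overline{l})^{-1}\,\overline{b}$ as you wrote it would need commutativity.
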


\begin{proof}
Let $f: k^{\alpha}L\rightarrow k\uparrow_{kL}^{(k^{\alpha}L)^e} =
(k^{\alpha}L)^e\ot_{\delta (kL)} k$ be the $k$-linear function defined by
$f(\overline{l})= (\overline{l}\ot \overline{1})\ot_{\delta (kL)} 1$ for all $l\in L$.
Clearly $f$ is bijective.
We check that it is a $(k^{\alpha}L)^e$-module homomorphism:
Let $l, m, n \in L$. Then
\begin{eqnarray*}
 f((\overline{m}\ot\overline{n}) \cdot \overline{l}) & = & f (\overline{m}\cdot
                \overline{l}\cdot \overline{n})\\
   & = & \alpha(m,l) \alpha(ml,n) f(\overline{mln}) \\
   &=& \alpha(m,l) \alpha(ml,n) (\overline{mln}\ot \overline{1})\ot_{\delta(kL)} 1 ,
\end{eqnarray*}
and on the other hand, since the tensor product is taken over $\delta(kL)$,
\begin{eqnarray*}
    (\overline{m}\ot \overline{n}) \cdot f(\overline{l}) & = & (\overline{m}\ot \overline{n})
   \cdot (\overline{l}\ot \overline{1})\ot _{\delta(kL)} 1 \\
    &=& \alpha(m,l) (\overline{ml}\ot \overline{n})\ot _{\delta(kL)} 1\\
    &=& \alpha(m,l) (\overline{ml}\ot \overline{n}) (\overline{n}\ot
    (\overline{n})^{-1}) \ot_{\delta(kL)} 1 \\
   &=& \alpha(m,l) \alpha(ml,n) (\overline{mln}\ot \overline{1})\ot_{\delta(kL)} 1 .
\end{eqnarray*}\end{proof}

As a consequence of Lemma~\ref{lem:induced}, we may apply the
Eckmann-Shapiro Lemma~\cite[Corollary~2.8.4]{Benson} to obtain
a result on Hochschild cohomology next.
For a $kL$-module $M$, the {\em group cohomology} of $L$
with coefficients in $M$ is 
\[
   \coh^{\bu}(L,M)=\Ext^{\bu}_{kL}(k,M) . 
\]
If $M$ is in addition an algebra and the action of $L$ is by 
algebra automorphisms,
then $\coh^{\bu}(L,M)$ is an algebra under cup product
(corresponding to tensor product of generalized extensions) followed by the
multiplication map $M\ot M\rightarrow M$.
In this way, $\coh^{\bu}(L, (k^{\alpha}L)^{ad})$ is itself an algebra.
We show that this is none other than the Hochschild cohomology 
of the twisted group algebra $k^{\alpha}L$, generalizing the
well known result for the group algebra $kL$ 
(see, e.g.,~\cite[Proposition~3.1]{SW}). 

\begin{theorem}\label{thm:iso-ad}
Let $L$ be a finite group and let $\alpha: L\times L\rightarrow k^{\times}$
be a 2-cocycle.  
There is an isomorphism of algebras,
$\HH^{\bu}(k^{\alpha}L) \simeq \coh^{\bu}(L, (k^{\alpha}L)^{ad})$.
\end{theorem}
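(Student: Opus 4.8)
The plan is to identify Hochschild cohomology of $k^{\alpha}L$ with an $\Ext$ group over the enveloping algebra, and then transport that along the induction description of Lemma~\ref{lem:induced} via Eckmann--Shapiro. Concretely, I would start from the standard identification
$\HH^{\bu}(k^{\alpha}L) = \Ext^{\bu}_{(k^{\alpha}L)^e}(k^{\alpha}L, k^{\alpha}L)$,
then substitute the left argument using Lemma~\ref{lem:induced}, which gives $k^{\alpha}L \simeq k\!\uparrow_{kL}^{(k^{\alpha}L)^e}$ as $(k^{\alpha}L)^e$-modules. The Eckmann--Shapiro Lemma (\cite[Corollary~2.8.4]{Benson}), applied to the subalgebra inclusion $\delta: kL \hookrightarrow (k^{\alpha}L)^e$, then yields
$\Ext^{\bu}_{(k^{\alpha}L)^e}\bigl(k\!\uparrow_{kL}^{(k^{\alpha}L)^e}, k^{\alpha}L\bigr) \simeq \Ext^{\bu}_{kL}\bigl(k, (k^{\alpha}L)\!\downarrow_{kL}\bigr) = \coh^{\bu}\bigl(L, (k^{\alpha}L)\!\downarrow_{kL}\bigr)$,
where the restriction of $k^{\alpha}L$ along $\delta$ is exactly the adjoint module $(k^{\alpha}L)^{ad}$ by the formula $\delta(l) = \overline{l}\ot(\overline{l})^{-1}$, since acting by $\overline{l}$ on the left and $(\overline{l})^{-1}$ on the right is the adjoint action. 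This establishes the isomorphism of graded vector spaces.

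The substantive part of the argument is upgrading this vector space (or graded module) isomorphism to an isomorphism of \emph{algebras}. The cup product on $\HH^{\bu}(k^{\alpha}L)$ corresponds, under the bar resolution or under Yoneda composition, to the multiplication on $k^{\alpha}L$ as a bimodule over itself; on the other side, the cup product on $\coh^{\bu}(L,(k^{\alpha}L)^{ad})$ is the one described just before the theorem statement, namely tensor product of extensions followed by the multiplication map $k^{\alpha}L \ot k^{\alpha}L \to k^{\alpha}L$. So I would check that the Eckmann--Shapiro isomorphism, which here is induced by the explicit map $f$ of Lemma~\ref{lem:induced} (or rather its inverse at the chain level), is compatible with these two product structures. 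The cleanest route is to observe that both products are instances of the general formalism of cup products for $\Ext$ of an algebra object (here the algebra $k^{\alpha}L$) in the respective module categories, and that induction/restriction along $\delta$ is a lax monoidal functor in the appropriate sense; alternatively, one picks a concrete projective resolution of $k$ over $kL$, induces it up to a projective resolution of $k^{\alpha}L$ over $(k^{\alpha}L)^e$ via $(k^{\alpha}L)^e \ot_{kL} (-)$, and computes both cup products on these chains, verifying they agree under $f$.

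The main obstacle I expect is precisely this chain-level verification of multiplicativity: one must be careful that the induced resolution $(k^{\alpha}L)^e \ot_{\delta(kL)} P_{\bu} \to k^{\alpha}L$ really is a resolution by projective $(k^{\alpha}L)^e$-modules (projectivity is fine since $kL$-projectives induce to $(k^{\alpha}L)^e$-projectives, and exactness uses that $(k^{\alpha}L)^e$ is free, hence flat, over $\delta(kL)$), and that the diagonal approximation computing cup product on the induced resolution matches the one on $P_{\bu}$ after applying $\Hom$. The cocycle $\alpha$ enters through the multiplication $\overline{l}\cdot\overline{m} = \alpha(l,m)\overline{lm}$ and the inverse formula $\overline{l^{-1}} = \alpha(l,l^{-1})(\overline{l})^{-1}$ from Section~\ref{two}; keeping track of these scalar factors is the only place the twisting is visible, and the cocycle identity \eqref{cocycle} is what makes everything consistent. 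Once multiplicativity is checked on chains, passing to cohomology gives the claimed algebra isomorphism, and I would remark that the case $\alpha$ trivial recovers \cite[Proposition~3.1]{SW}.
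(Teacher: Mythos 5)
Your proposal follows the paper's proof essentially verbatim: the identification $\HH^{\bu}(k^{\alpha}L)=\Ext^{\bu}_{(k^{\alpha}L)^e}(k^{\alpha}L,k^{\alpha}L)$, substitution of the induced module from Lemma~\ref{lem:induced}, Eckmann--Shapiro along $\delta$ (using freeness, hence flatness, of $(k^{\alpha}L)^e$ over $\delta(kL)$), identification of the restricted module with $(k^{\alpha}L)^{ad}$, and finally multiplicativity checked as in \cite[Proposition~3.1]{SW}. The extra detail you supply on the chain-level verification of the cup product compatibility is a reasonable elaboration of the step the paper delegates to that reference; there is no gap.
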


\begin{proof}
The enveloping algebra $(k^{\alpha}L)^e$ of $k^{\alpha}L$ 
is free as a right $kL$-module where $kL$ acts via the embedding $\delta$:
Elements of the form $1\ot \overline{l}$ ($l\in L$)
constitute a set of free generators. 

Since $(k^{\alpha}L)^e$ is flat as a $kL$-module, we may apply the 
Eckmann-Shapiro Lemma
in combination with Lemma~\ref{lem:induced} to obtain an
isomorphism of graded vector spaces,
\begin{eqnarray*}
   \HH^{\bu} (k^{\alpha}L) & = & \Ext^{\bu}_{(k^{\alpha}L)^e} (k^{\alpha}L, k^{\alpha}L)\\
    & \simeq & \Ext^{\bu}_{(k^{\alpha}L)^e} (k\uparrow_{kL}^{(k^{\alpha}L)^e} , k^{\alpha}L)\\
   & \simeq & \Ext^{\bu} _{kL} (k, k^{\alpha}L). 
\end{eqnarray*}
Note that the corresponding action of $kL$ on $k^{\alpha}L$ is precisely
the adjoint action, since $kL$ embeds into $(k^{\alpha}L)^e$ as $\delta(kL)$.
Finally, one may check that this is in fact an isomorphism of
algebras, using essentially the same proof 
as~\cite[Proposition~3.1]{SW}.
\end{proof}

\begin{corollary}\label{cor:embedding}
Let $L$ be a finite group and let $\alpha:L\times L\rightarrow k^{\times}$ be a 2-cocycle.
The group cohomology ring $\coh^{\bu}(L,k)$ embeds as a subalgebra
of the Hochschild cohomology ring $\HH^{\bu}(k^{\alpha}L)$ of the twisted group algebra $k^{\alpha}L$.
\end{corollary}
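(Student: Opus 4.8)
The plan is to deduce the embedding directly from Theorem~\ref{thm:iso-ad} together with the module map $k \to (k^{\alpha}L)^{ad}$ sending $1$ to $\overline{1}$, which is a $kL$-module homomorphism (indeed an algebra homomorphism) because the adjoint action fixes $\overline{1}$: for all $l\in L$, $l(\overline{1}) = \overline{l}\cdot\overline{1}\cdot(\overline{l})^{-1} = \overline{1}$. First I would note that this inclusion $k \hookrightarrow (k^{\alpha}L)^{ad}$ induces, by functoriality of $\Ext^{\bu}_{kL}(k,-)$ and compatibility of cup products with the multiplication maps, a homomorphism of graded algebras
\[
  \coh^{\bu}(L,k) = \Ext^{\bu}_{kL}(k,k) \longrightarrow \Ext^{\bu}_{kL}(k,(k^{\alpha}L)^{ad}) = \coh^{\bu}(L,(k^{\alpha}L)^{ad}).
\]
Composing with the algebra isomorphism of Theorem~\ref{thm:iso-ad} gives an algebra homomorphism $\coh^{\bu}(L,k) \to \HH^{\bu}(k^{\alpha}L)$, and it remains to check injectivity.

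For injectivity, the key observation is that the $kL$-module map $k \to (k^{\alpha}L)^{ad}$ is split: the $k$-span of $\overline{1}$ is a $kL$-submodule of $(k^{\alpha}L)^{ad}$ (being the fixed subspace, or simply because it is the image of the counit-like splitting $\overline{l}\mapsto \delta_{l,1}$, which is $L$-equivariant since the adjoint action permutes the basis $\{\overline{l}\}$ up to scalars and fixes $\overline{1}$). A split injection of $kL$-modules induces a split injection on $\Ext^{\bu}_{kL}(k,-)$, so $\coh^{\bu}(L,k) \to \coh^{\bu}(L,(k^{\alpha}L)^{ad})$ is injective; transporting along the isomorphism of Theorem~\ref{thm:iso-ad} yields the desired embedding.

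The only point requiring a little care—and the main (mild) obstacle—is verifying that the splitting $(k^{\alpha}L)^{ad} \to k$, $\overline{l}\mapsto \delta_{l,1}$, really is a $kL$-module homomorphism, i.e.\ that the adjoint action sends each $\overline{l}$ with $l\neq 1$ into the span of $\{\overline{m}: m\neq 1\}$. This is immediate because $m(\overline{l}) = \alpha^{-1}(m^{-1},m)\,\overline{m}\cdot\overline{l}\cdot\overline{m^{-1}}$ is a nonzero scalar multiple of $\overline{mlm^{-1}}$, and $mlm^{-1}=1$ iff $l=1$. One should also remark that the composite map is a ring homomorphism preserving the identity, which follows since the inclusion $k\to(k^{\alpha}L)^{ad}$ is a unital algebra map and the isomorphism of Theorem~\ref{thm:iso-ad} is an algebra isomorphism; alternatively, one may simply cite the standard fact (used for $kL$ in \cite[Proposition~3.1]{SW}) that under the identification $\HH^{\bu}(k^{\alpha}L)\simeq\coh^{\bu}(L,(k^{\alpha}L)^{ad})$ the canonical map from $\coh^{\bu}(L,k)$ is realized by the embedding $k\hookrightarrow k^{\alpha}L$ of $k^{\alpha}L$-bimodules, which is split.
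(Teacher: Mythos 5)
Your proposal is correct and follows essentially the same route as the paper: the paper likewise observes that $k\cdot\overline{1}$ is a trivial one-dimensional direct summand of $(k^{\alpha}L)^{ad}$ (as part of the decomposition by conjugacy classes), so that $\Ext^{\bu}_{kL}(k,k)$ splits off as a subalgebra of $\coh^{\bu}(L,(k^{\alpha}L)^{ad})\simeq\HH^{\bu}(k^{\alpha}L)$. Your explicit verification that the projection $\overline{l}\mapsto\delta_{l,1}$ is $L$-equivariant is just a more detailed rendering of the same splitting.
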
 

\begin{proof}
By Theorem~\ref{thm:iso-ad}, $\HH^{\bu}(k^{\alpha}L)\simeq
\coh^{\bu}(L, (k^{\alpha}L)^{ad})$.  
As a $kL$-module, $(k^{\alpha}L)^{ad}$ decomposes into a direct sum
of submodules indexed by conjugacy classes of $L$.
In particular, the one-dimensional 
direct summand $k\cdot \overline{1}$ of $(k^{\alpha}L)^{ad}$
is a trivial $kL$-module.
The corresponding direct summand $\Ext^{\bu}_{kL}(k, k\cdot \overline{1})$ of
$\Ext^{\bu}_{kL}(k, (k^{\alpha}L)^{ad})$ is  a subalgebra under cup product, since $k\cdot \overline{1}$ is a subalgebra of $k^{\alpha}L$.
This summand is precisely
the group cohomology $\coh^{\bu}(L,k) = \Ext^{\bu}_{kL}(k,k)$.
\end{proof}

\subsection*{Crossed coproducts and finite generation assumptions}
We now show that the crossed coproducts $kL\natural_{\sigma}^{\tau} k^G$ 
satisfy our assumptions (fg1) and (fg2). 

\begin{theorem}
Let $K=kL\natural^{\tau}_{\sigma} k^G$ be a crossed coproduct Hopf algebra
as defined in Section~\ref{two}. 
Then $K$ satisfies conditions (fg1) and (fg2) of Section~\ref{supp-var}.
\end{theorem}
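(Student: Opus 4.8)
The plan is to reduce the verification of (fg1) and (fg2) for $K = kL\natural_\sigma^\tau k^G$ to the corresponding facts about the twisted group algebras $k^{\sigma_x}L$, exactly as was done for smash coproducts in Theorem~\ref{thm:fg12}. Since the elements $p_x$ are orthogonal central idempotents of $K$ with $K \simeq \bigoplus_{x\in G} k^{\sigma_x}L$, one has a ring decomposition $\HH^{\bu}(K) \simeq \bigoplus_{x\in G}\HH^{\bu}(k^{\sigma_x}L)$ and, for $K$-modules $M = \bigoplus_x M_x$, $N = \bigoplus_x N_x$, a decomposition $\Ext^{\bu}_K(M,N) \simeq \bigoplus_{x\in G}\Ext^{\bu}_{k^{\sigma_x}L}(M_x,N_x)$ compatible with the module structures. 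So everything comes down to showing each $k^{\sigma_x}L$ satisfies (fg1) and (fg2) and then assembling the pieces into a single graded subalgebra $H_K^{\bu}\subseteq\HH^{\bu}(K)$.

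First I would fix $x\in G$ and treat $B = k^{\sigma_x}L$. By Theorem~\ref{thm:iso-ad}, $\HH^{\bu}(B) \simeq \coh^{\bu}(L, B^{ad})$, and by Corollary~\ref{cor:embedding} the group cohomology ring $\coh^{\bu}(L,k)$ embeds as a subalgebra. Take $H_B^{\bu}$ to be the subalgebra of $\HH^{\bu}(B)$ generated by the image of $\coh^{\bu}(L,k)$ together with $\HH^0(B)$. Since $k$ has positive characteristic $p$ and $L$ is finite, the Evens–Venkov theorem gives that $\coh^{\bu}(L,k)$ is a finitely generated commutative $k$-algebra, and $\HH^0(B)$ — the center of $B$ — is finite dimensional, so $H_B^{\bu}$ is a finitely generated commutative algebra with $H_B^0 = \HH^0(B)$; this is (fg1) for $B$. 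For (fg2), one needs $\Ext^{\bu}_B(B/\Jac(B), B/\Jac(B))$ finitely generated over $H_B^{\bu}$; the standard route is to use the embedding $\delta\colon kL\hookrightarrow B^e$, note $B^e$ is free over $kL$, and combine the Eckmann–Shapiro reduction with Evens–Venkov finite generation of $\Ext^{\bu}_{kL}(-,-)$, together with the fact (which already appears in the excerpt via the isomorphism $\Ext^{\bu}_B(M,N)\simeq\Ext^{\bu}_B(k, N\otimes M^*)$ for Hopf-type algebras, but here available since $B$ is a twisted group algebra and $B$-modules have the right tensor/dual structure described just before Theorem~\ref{theor_modules_bicrossed char pos}) that $\Ext$ between any two finite-dimensional $B$-modules is controlled by the cohomology of $L$. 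Concretely, $\Ext^{\bu}_B(M,N)$ becomes a subquotient of $\coh^{\bu}(L, N\otimes_k M^*)$-type data, and Evens–Venkov finite generation of group cohomology with coefficients in a finite-dimensional module finishes it.

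Then I would globalize: set $H_K^{\bu} = \bigoplus_{x\in G} H_{k^{\sigma_x}L}^{\bu}$, a graded subalgebra of $\HH^{\bu}(K) \simeq \bigoplus_{x\in G}\HH^{\bu}(k^{\sigma_x}L)$. It is commutative (each summand is) and finitely generated (finitely many summands, each finitely generated), and $H_K^0 = \bigoplus_x\HH^0(k^{\sigma_x}L) = \HH^0(K)$, giving (fg1) for $K$. For (fg2), write $K/\Jac(K) \simeq \bigoplus_x k^{\sigma_x}L/\Jac(k^{\sigma_x}L)$; then $\Ext^{\bu}_K(K/\Jac K, K/\Jac K) \simeq \bigoplus_x \Ext^{\bu}_{k^{\sigma_x}L}(\text{-},\text{-})$, and each summand is finitely generated over its $H_{k^{\sigma_x}L}^{\bu}$, hence the whole is finitely generated over $H_K^{\bu}$.

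The main obstacle I anticipate is (fg2) for the individual twisted group algebra $k^{\sigma_x}L$ — i.e.\ upgrading Theorem~\ref{thm:iso-ad} from an isomorphism of cohomology \emph{rings} to the statement that $\Ext$ between arbitrary finite-dimensional modules is a finitely generated module over the image of group cohomology. The subtlety is that $B^{ad} = (k^{\sigma_x}L)^{ad}$ is not the trivial module, so one must check the $H_B^{\bu}$-action (which factors through $\coh^{\bu}(L,k)$) is the one under which Evens–Venkov applies, and that the cup-product/Yoneda action used to define support varieties matches the $\coh^{\bu}(L,k)$-module structure on $\coh^{\bu}(L, N\otimes M^*)$. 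Once that compatibility is pinned down — essentially a diagram chase using the embedding $\delta$ and naturality of Eckmann–Shapiro — the finite generation is immediate from classical group cohomology, and the rest is the bookkeeping with the central idempotents $p_x$ already rehearsed in Theorem~\ref{thm:fg12}.
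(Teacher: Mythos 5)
Your proposal follows essentially the same route as the paper: decompose $K$ along the central idempotents $p_x$ into the twisted group algebras $k^{\sigma_x}L$, use Theorem~\ref{thm:iso-ad} (i.e.\ Lemma~\ref{lem:induced} plus Eckmann--Shapiro) to transport everything to group cohomology of $L$, and invoke Evens--Venkov finite generation. Two remarks. First, a small but genuine slip: $\coh^{\bu}(L,k)$ is only \emph{graded}-commutative, so in odd characteristic odd-degree classes anticommute (e.g.\ for $L$ containing $\Z_p\times\Z_p$, $p$ odd, the exterior generators do not commute), and (fg1) literally requires a commutative algebra; the paper therefore takes $H^{\bu}_K=\coh^{ev}(L,k)\cdot\HH^0(K)$, keeping only the evenly graded part (together with degree $0$). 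You should do the same when forming your $H^{\bu}_B$. Second, for (fg2) the paper works with the equivalent bimodule formulation from \cite[Proposition 2.4]{EHTSS}: it shows $\HH^{\bu}(K,M)\simeq\Ext^{\bu}_L(k,M^{ad})$ for bimodules $M={}_xM_x$ and checks the $\coh^{\bu}(L,k)$-actions match, which sidesteps the step you flag as your main obstacle --- relating $\Ext^{\bu}_{k^{\sigma_x}L}(M,N)$ of one-sided modules to group cohomology with coefficients in $N\ot M^*$. Your version can be made to work (via $\Ext^{\bu}_B(M,N)\simeq\HH^{\bu}(B,\Hom_k(M,N))$, an isomorphism rather than the ``subquotient'' you describe), but the compatibility of actions still has to be verified either way, and the bimodule route is the cleaner bookkeeping.
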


\begin{proof}
As conditions (fg1) and (fg2)  depend only on the algebra structure of $K$,
the theorem statement does not depend on $\tau$. 
We view $K$ as a direct sum of the twisted group algebras
$k^{\sigma_x}L$, where $x$ ranges over $G$. 
Choose
\begin{equation}\label{eqn:HK}
  H^{\bu}_K =  \coh^{ev}(L,k)\cdot \HH^0(K) ,
\end{equation}
where the superscript $ev$ indicates that we take only the evenly
graded part if the characteristic of $k$ is not 2 (to ensure
commutativity).
Then $H^0_K = \HH^0(K)$ by design.
The subalgebra $\coh^{ev}(L,k)$ is finitely generated
(see, e.g.,~\cite[Corollary~4.2.2]{Benson2}), 
and since $\HH^0(K)$ is finite dimensional, we see that $H^{\bu}_K$ is
finitely generated, and so (fg1) holds.

Next we explain that $\HH^{\bu}(K,M)$ is finitely generated over
$H^{\bu}_K$ for all finite dimensional $K$-bimodules $M$.
It suffices to show this for bimodules of the form $M= {}_xM_x
=  p_x\cdot M \cdot  p_x$, as these
are the bimodules that contribute to the Hochschild cohomology
$\HH^{\bu}(K,M)$.
By Lemma~\ref{lem:induced} and the Eckmann-Shapiro Lemma, 
for such a bimodule $M$, 
$$
   \HH^{\bu}(K, M) = \Ext^{\bu}_{(k^{\sigma_x} L)^e}(k^{\sigma_x}L, M)
  \simeq \Ext^{\bu}_L(k,M^{ad}).
$$
Now consider the action of $\coh^{\bu}(L,k)$ on $\HH^{\bu}(K,M)$ induced by
the inclusion of $\coh^{\bu}(L,k)$ into $\HH^{\bu}(K)$ 
given by Corollary~\ref{cor:embedding}.
We claim that this action  corresponds to that
of $\coh^{\bu}(L,k)$ on $\coh^{\bu}(L,M^{ad}) = \Ext^{\bu}_L(k,M^{ad})$
by cup product. Again this is essentially the same proof
as~\cite[Proposition~3.1]{SW}. 
Since $\coh^{\bu}(L,M^{ad})$ is finitely generated over $\coh^{\bu}(L,k)$
(see, e.g.,~\cite[Theorem~4.2.1]{Benson2}), we
conclude that $\coh^{\bu}(K,M)$ is finitely generated over $H^{\bu}_K$.
So (fg2) holds.
\end{proof}

Since $kL\natural_{\sigma}^{\tau} k^G$ satisfies (fg1) and (fg2)
by the theorem, support varieties
for $kL\natural_{\sigma}^{\tau} k^G$ may be defined as in Section~\ref{supp-var}.
We next examine these varieties
and some modules for a specific example.

\begin{example}\label{pos char}
Let $k$ be a field of characteristic $3$.
Consider the groups
$$G =\mathbb Z_2 = \langle h\rangle\ \ \ \mbox{ and } \ \ \
L = \mathbb Z_2\times \mathbb Z_2 \times \mathbb Z_3 \times \mathbb Z_3= \langle a\rangle \times \langle b\rangle \times
\langle c\rangle \times \langle d\rangle .$$
Let the generator $h$ of $G$ act on $L$
by interchanging the generators $c$ and $d$ (and leaving alone the generators $a$ and $b$).

A projective representation $\rho$ of the Klein four group
$\langle a\rangle \times \langle b\rangle$ may be given by
\[\rho (a) = \left(\begin{array}{cc} 0 & -1\\
1 & 0 \\
\end{array}\right), \quad
\rho (b) = \left(\begin{array}{cc} 1 & 0\\
0 & -1 \\
\end{array}\right),\quad
\rho (ab) = \left(\begin{array}{cc} 0 & 1\\
1 & 0 \\
\end{array}\right) . \]
There is an associated nontrivial $2$-cocycle $\alpha$, defined by
\[
  \alpha(l,m) = \rho(l)\rho(m)(\rho(lm))^{-1}
\]
for all $l,m\in \langle a \rangle \times \langle b \rangle$. 
We will also denote by $\alpha$ the trivial extension of this cocycle to $L$, that is
\[\alpha(a^{n_1}b^{m_1}c^{s_1}d^{t_1}, a^{n_2}b^{m_2}c^{s_2}d^{t_2}) = \alpha(a^{n_1}b^{m_1}, a^{n_2}b^{m_2})\]
for all integers $m_1,m_2,n_1,n_2, s_1, s_2, t_1, t_2$.
This gives rise to a $2$-cocycle $\sigma: L\times L\to (k^G)^{\times}$
in the following way. Let $$\sigma(l,m) =  p_1 + \alpha(l,m) p_{h}$$ for all
$l,m\in L$.
Thus $\sigma_h = \alpha$ and $\sigma_1$ is trivial.
We take $\tau: L\rightarrow k^G\ot k^G$ to be trivial,
that is, $\tau(l) = 1\ot 1$ for all $l\in L$.
Thus we leave $\tau$ out of the notation, writing $kL\natural _{\sigma} k^G$
for the resulting crossed coproduct.

One checks that the center of $K=kL\natural_{\sigma} k^G$  is spanned by $1\natural p_{h}$ together with
all $x\natural p_1$, $x\in L$.
So $\HH^0(K)\simeq Z(K)$ is spanned by these elements. 
Letting $H^{\bu}_K$ be as in~(\ref{eqn:HK}), 
$\V_K = (\cup_e \V_{kL}) \times G$ where $e$ ranges
over a set of primitive central idempotents in $kL$. 
It follows that 
$K$-modules of the form $M = M_{h}$ have support varieties of the form
$\V_{kL}(M_{h}) \times  h$, 
where $\V_{kL}(M_h)$ is the maximal ideal spectrum of the quotient
of $\coh^{ev}(L,k)$ by the annihilator of $\Ext^{\bu}_{k^{\sigma_h}L}(M_h,M_h)$
under the action given by $ - \ot M_h$
(which takes $kL$-modules to $k^{\sigma_h}L$-modules) followed by Yoneda composition. 
(The notation is not meant to indicate that this is a variety of a $kL$-module.)
Indecomposable $K$-modules of the form $M = M_1$ have support
varieties of the form $\V_{kL}(M_1)_e\times 1$, where the subscript $e$
is meant to indicate the block in which $M_1$ lies.
As $M_1$ is a $kL$-module, we may view $\V_{kL}(M_1)$ as the usual variety
of a $kL$-module.

Let $U_{\rho}$ be the
$k^{\alpha}(\langle a\rangle \times\langle b \rangle) $-module corresponding to
the projective representation $\rho$ described above. Let
$U = U_{\rho}\!\uparrow_{k^{\alpha}(\langle a\rangle \times\langle b \rangle )}
^{k^{\alpha}(\langle a\rangle \times\langle b \rangle\times\langle c\rangle )}$, and let $d$ act trivially on $U$ so that $U$ is a $k^{\alpha}L$-module.
Note that $U$ is not projective:
If it were, then restricting to $k\langle d \rangle$ would produce a
projective $k\langle d \rangle$-module since $k^{\alpha}L$ is
free over $k\langle d \rangle$. However the action of $d$ on $U$ is
trivial, and the characteristic of $k$ is the order of $d$, 
so the restriction of $U$ to $k\langle d\rangle$ is not projective. 
We will use $U$  to construct nonprojective $K$-modules with similar
properties to those in the last section and in~\cite{BW}.
For example, setting $M= U\ot kp_h$, a nonprojective $K$-module, we claim that
$M\ot M$ is projective: By Theorem~\ref{theor_modules_bicrossed char pos}, 
$M\ot M\simeq U \ot ({}^h\! U)\ot kp_1$. 
Restricting to $kL$, we have 
$U\ot ({}^h\! U)$, a
projective $kL$-module since its restriction to the Sylow 3-subgroup
$\langle c\rangle\times\langle d \rangle$ of $L$ is
isomorphic to a direct sum of copies of the free module
$k\langle c\rangle \ot k\langle d\rangle$.
It follows that $M\ot M$ is projective as a $K$-module.  
Note that here we did not use  varieties in our argument---the 
tensor product property is known for $kL$-modules, but
we are dealing with  the tensor product of a $k^{\sigma_h}L$-module and a
$k^{\sigma_h^{-1}}L$-module.
\end{example}

\section{Projective modules of quasitriangular Hopf algebras}\label{five}

In this section we show that the behavior of projective and nonprojective
modules occurring in the last two sections does not happen when the Hopf
algebra is quasitriangular.
More generally, we consider a finite dimensional Hopf algebra $A$ and a
finite dimensional $A$-module $M$ for which $M\ot M^*\simeq M^*\ot M$.

The category $A$-mod of finite dimensional (left) $A$-modules is a rigid monoidal category.
In particular for every finite dimensional $A$-module $M$, the composition
\begin{equation}
\label{eqn:rigidity}
   M\stackrel{\text{coev}_M\ot \text{id}_M}
     {\relbar\joinrel\relbar\joinrel\relbar\joinrel\relbar\joinrel\relbar\joinrel\longrightarrow}
   M\ot M^*\ot M
   \stackrel{\text{id}_M\ot \text{ev}_M}
     {\relbar\joinrel\relbar\joinrel\relbar\joinrel\relbar\joinrel\longrightarrow}
     M
\end{equation}
is the identity map on $M$, where ev$_M$ and coev$_M$ denote the canonical
evaluation and coevaluation maps.
See, for example, \cite[Section 2.1]{BK} for details.

The tensor product of a projective $A$-module with another module is projective
(see, for example, \cite[Proposition 3.1.5]{Benson}).
The tensor product of two nonprojective modules can be projective,
and when $A$ has the tensor product property, there are
necessary and sufficient 
conditions for this projectivity as we saw in Theorem~\ref{properties}(i):
The dimension of the variety of a module is 0 precisely when 
the module is projective.
Under a further assumption on the dual module,
 an elementary argument shows that tensor powers of
a nonprojective module are nonprojective:

\begin{theorem}\label{thm:rigid} 
Let $A$ be a finite dimensional  Hopf algebra, and let $n$ be a positive integer.
Let $M$ be a finite dimensional $A$-module for which $M\ot M^*\simeq M^*\ot M$.
Then $M$ is projective if and only if $M^{\ot n}$ is projective.
\end{theorem}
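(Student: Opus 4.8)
The plan is to reduce everything to the single-step implication: if $M^{\ot n}$ is projective then $M^{\ot(n-1)}$ is projective. Once this is established, a trivial downward induction on $n$ yields that $M^{\ot n}$ projective implies $M = M^{\ot 1}$ projective. The reverse implication, that $M$ projective implies $M^{\ot n}$ projective, is immediate from the standard fact that the tensor product of a projective module with any module is projective (cited in the excerpt from \cite[Proposition~3.1.5]{Benson}), so only the forward direction requires work.

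For the key step, I would start from the rigidity identity~\eqref{eqn:rigidity}, which says that $M$ is a direct summand of $M\ot M^*\ot M$ via the composite $(\text{id}_M\ot\text{ev}_M)\circ(\text{coev}_M\ot\text{id}_M)=\text{id}_M$. Tensoring this split monomorphism on the right by $M^{\ot(n-2)}$ (using that $-\ot M^{\ot(n-2)}$ preserves split monomorphisms), we get that $M^{\ot(n-1)}$ is a direct summand of $M\ot M^*\ot M^{\ot(n-1)}$. Now apply the hypothesis $M\ot M^*\simeq M^*\ot M$: this gives $M\ot M^*\ot M^{\ot(n-1)}\simeq M^*\ot M\ot M^{\ot(n-1)}=M^*\ot M^{\ot n}$. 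Since $M^{\ot n}$ is projective by assumption, $M^*\ot M^{\ot n}$ is projective (again tensor product with a projective is projective, on either side since $A$ is a Hopf algebra). A direct summand of a projective module is projective, so $M^{\ot(n-1)}$ is projective, completing the inductive step.

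The one point deserving a little care is the base of the induction and the degenerate cases: for $n=1$ the statement is a tautology, and the argument above for the step implicitly assumes $n\geq 2$ so that $M^{\ot(n-2)}$ makes sense as an honest (possibly trivial, i.e.\ $k$ when $n=2$) module. I would note that when $n=2$ the chain simply reads: $M$ is a summand of $M\ot M^*\ot M\simeq M^*\ot M\ot M=M^*\ot M^{\ot 2}$, which is projective. I do not anticipate any real obstacle here; the only thing to get right is the bookkeeping of which side the tensor factors sit on, and the repeated use of two elementary closure properties (tensoring with a projective, and passing to direct summands). The hypothesis $M\ot M^*\simeq M^*\ot M$ is used exactly once, to commute $M\ot M^*$ past into the shape $M^*\ot(M\ot\cdots)$; for a quasitriangular Hopf algebra this isomorphism is supplied by the $R$-matrix, which is how Theorem~\ref{thm:rigid} specializes to the quasitriangular case.
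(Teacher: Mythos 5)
Your proposal is correct and follows essentially the same argument as the paper: both use rigidity to exhibit a lower tensor power as a direct summand of a higher one tensored with $M^*$, invoke the hypothesis $M\ot M^*\simeq M^*\ot M$ once per descent step, and conclude by the facts that tensoring with a projective yields a projective and that direct summands of projectives are projective. The only difference is cosmetic bookkeeping (you tensor the rigidity identity on the right and move $M^*$ to the front, while the paper tensors on the left and moves $M^*$ to the end).
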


\begin{proof}
The tensor product of a projective module with any module is projective
(see, e.g.,~\cite[Proposition~3.1.5]{Benson}), so if
$M$ is a projective module, then $M^{\ot n}$ is projective.

For the converse, first consider the case $n=2$. Since the
composition of functions in (\ref{eqn:rigidity}) is the identity map,  
$M$ is a direct summand of $M\ot M^*\ot M$.
By hypothesis, $M\ot M^*\ot M\simeq M\ot M\ot M^*$.
If $M\ot M$ is projective, then  $M\ot M\ot M^*$ is projective, and so $M$
is a direct summand of a projective module and thus is projective itself.

The general statement of the converse now follows by induction on $n$.
Apply $M^{\ot (n-1)}\ot - $ to  (\ref{eqn:rigidity}) to see that $M^{\ot n}$
is a direct summand of $M^{\ot n}\ot M^*\ot M \simeq M^{\ot (n+1)}\ot M^*$.
Therefore if $M^{\ot (n+1)}$ is projective, then $M^{\ot n}$ is projective.
\end{proof}

\begin{remark}
Let $A$-stmod be the stable module category of $A$, that is,
objects are finite dimensional $A$-modules, and for any two objects $M, N$,
morphisms are
\[
    \underline{\text{Hom}}_A(M,N)  = \text{Hom}_A(M,N)/\text{PHom}_A(M,N)
\]
where $\text{PHom}_A(M,N)$ is the subspace consisting
of each $A$-module homomorphism
from $M$ to $N$ that factors through a projective module.
Thus each projective module is isomorphic to the 0 object in $A$-stmod.
If $A$ is quasitriangular,
we may view the above theorem as stating that there are no
nilpotent objects in the category $A$-stmod.
\end{remark}


\end{document}